\shorttitle{Monte Carlo integration} 
\newcommand{\Var}{\operatorname{var}}
\newcommand{\trace}{\operatorname{tr}}
\newcommand{\diff}{\mathrm{d}}
\newcommand{\expec}{\mathbb{E}}
\newcommand{\reals}{\mathbb{R}}
\newcommand{\pr}{\mathbb{P}}
\newcommand{\eps}{\varepsilon}
\newcommand{\oh}{\mathrm{o}}
\newcommand{\Oh}{\mathrm{O}}
\newcommand{\dto}{\rightsquigarrow}
\newcommand{\argmin}{\operatornamewithlimits{\arg\min}}
\newcommand{\1}{\mathds{1}}
\definecolor{chocolate(traditional)}{rgb}{0.48, 0.25, 0.0}
\definecolor{darkraspberry}{rgb}{0.53, 0.15, 0.34}
\definecolor{darkmagenta}{rgb}{0.55, 0.0, 0.55}
\begin{document}

\title{Monte Carlo integration with a growing number of control variates} 

\authorone[T\'el\'ecom Paris]{Fran\c{c}ois Portier} 
\addressone{LTCI, T\'el\'ecom Paris, Institut polytechnique de Paris, rue Barrault, 75013 Paris, France. Email: francois.portier@gmail.com} 
\authortwo[UCLouvain]{Johan Segers}
\addresstwo{Institut de statistique, biostatistique et sciences actuarialles, LIDAM, UCLouvain, Voie du Roman Pays 20, B-1348 Louvain-la-Neuve, Belgium. Email: johan.segers@uclouvain.be}

\begin{abstract}
 It is well known that Monte Carlo integration with variance reduction by means of control variates can be implemented by the ordinary least squares estimator for the intercept in a multiple linear regression model. A central limit theorem is established for the integration error if the number of control variates tends to infinity. The integration error is scaled by the standard deviation of the error term in the regression model. If the linear span of the control variates is dense in a function space that contains the integrand, the integration error tends to zero at a rate which is faster than the square root of the number of Monte Carlo replicates. Depending on the situation, increasing the number of control variates may or may not be computationally more efficient than increasing the Monte Carlo sample size.
\end{abstract}

\keywords{central limit theorem; control variates; multiple linear regression; ordinary least squares; post-stratification; Legendre polynomial} 

\ams{60F05}{62J05;65C05} 

\section{Introduction}

Numerical integration algorithms can generally be characterized by (a) the \textit{integration points} at which the integrand is evaluated and (b) the \textit{integration weights} describing how to combine the evaluations of the integrand.  Popular algorithms include the Riemann sums method, the Gaussian quadrature rule, and the classical Monte Carlo method.
Those algorithms are usually compared by looking at the integration error for a given number, say $n \geq 1$, of integration points. Two types of methods can be distinguished. The ones that are based on deterministic integration points (e.g., equally spaced points) including the Riemann sums and the Gaussian quadrature, and the ones that generate randomly the integration points including the Monte Carlo method. The deterministic methods reach an accuracy of order $n^{-k/d}$ \citep[Theorem~1]{novak:2016}, where $k$ stands for the regularity of the integrand and $d$ is the dimension of the integration domain, whereas random methods are subjected to an optimal error bound of order $n^{-k/d}n^{-1/2}$ \citep[Theorem~3]{novak:2016}. For instance, the naive Monte Carlo method, which does not use any regularity of the integrand, converges at the rate $n^{-1/2}$. Those error bounds testify to the benefits of random methods over deterministic ones especially when facing high-dimensional settings. 

The method of control variates is a popular technique in Monte Carlo integration that aims at reducing the variance of the naive Monte Carlo estimate by taking advantage of the regularity of the integrand \citep{Glasserman2003,glynn+s:2002,mcbook,casella+r:2003}. It is based on the introduction of auxiliary functions, called control variates, with known integral. Given a fixed number of control variates, the method consists in (i)~fitting a linear combination of the control variates to the integrand, and (ii)~using the fitted function in a modified Monte Carlo procedure. As noted in \cite{oates:2016}, the fit of the integrand in step~(i) generally uses the integration points, which makes the control variate method a post-hoc scheme, i.e., that might be done after sampling the integration points. The control variate approach is quite general as it allows to recover several famous examples from numerical integration. In dimension~$1$, the Newton--Cotes rule with random interpolation points can be recovered by taking the polynomials of degree smaller than $n-1$ as control variates. The post-stratification method can also be recovered by combining the indicators of a given partition of the integration domain (see Example~\ref{ex:indicator} in Section~\ref{sec:examples} below). When no control variates are used, it coincides with the classical Monte Carlo method.

An important field of application of the control variates method is financial engineering where it has been used for Asian option pricing in the Black--Scholes model \cite[Example~4.1.2]{Glasserman2003} or to solve backward stochastic differential equations \cite{gobet+l:2010}. More recently, it has been helpful in reinforcement learning to accelerate the estimation of the optimal policy \cite{jie+a:2010}. As highlighted in the present paper the method is efficient when many integrals need to be computed. This is the case for instance in quantile estimation \citep{HesterbergNelson1998}, option pricing \citep{glasserman+y:2005}, and likelihood computation in statistical models with latent variables \citep{portier+s:2018v3}, which arise frequently in economics \cite{mcfadden:2001} and medicine \citep[Examples~4, 6 and~9]{mcculloch+s:2001}. Finally, note that using importance sampling permits to recover the Lebesgue measure as the reference measure \citep[Theorem~2]{owen+z:2000} which in turn allows the use of many control variates such as polynomials, indcators, splines and Gaussian mixtures.

As illustrated by the well-known Runge phenomenon in approximation theory, enlarging the number of control variates does not necessarily improve the method. A key question then, which will be central in the paper, is related to the number of control variates that should be used in the procedure. The possibility of letting the number of control functions tend to infinity is already alluded to in \citep[Theorem~3]{glynn+s:2002}, who show that, for control functions arising as the power sequence of a given function, the variance of the limiting normal distribution of the error of the control variate method converges to the variance of the residual of the conditional expectation of the integrand given the initial control function. However, this result is still cast within the setting of a fixed number of control variates, i.e., the number of control variates does not depend on the Monte Carlo sample size. A recent proposal in \citep{oates:2016} is to construct the linear fit to the integrand in step~(i) above as an element of a reproducing kernel Hilbert space, whose dimension grows with the sample size $n$. Their approach leads to a convergence rate that is at least as fast as $n^{-7/12}$ and thus improves over the Monte Carlo rate. Further refinements are given in \citep{oates+c+b+g:2017}, with tighter error bounds depending on the smoothness of the integrand.

In this paper, we adopt the original control variate framework but allow the number of control variates $m = m_n$ to grow with $n$. Among the six control variate estimators in \cite{glynn+s:2002}, only one possesses the property of integrating the constants and the control functions without error. This is the one we promote and study in this paper. We use the denomination ordinary least squares Monte Carlo (OLSMC) because of the well-known link \citep{Glasserman2003} with the ordinary least squares estimator for the intercept in a multiple linear regression model with the integrand as dependent variable and the control variates as explanatory variables.

Our main result is that when $m_n \to \infty$ but $m_n = \oh(n^{1/2})$ and under reasonable conditions on the control functions and the integrand, the OLSMC estimator obeys a central limit theorem with the non-standard rate $n^{-1/2} \sigma_n$, where $\sigma_n$ is the standard deviation of the error term in the aforementioned multiple regression model. Moreover, we show that the common estimator $\hat{\sigma}_n$ of the standard deviation defined via the residual sum of squares is consistent in the sense that $\hat{\sigma}_{n} / \sigma_{n} \to 1$ in probability. This fact guarantees the asymptotic coverage of the usual confidence intervals.

If $\sigma_n \to 0$, then the convergence rate of the OLSMC is faster than the $n^{-1/2}$ rate of the ordinary Monte Carlo procedure. Still, this acceleration is offset by an increased computational cost, from $\Oh(n)$ operations for ordinary Monte Carlo to $\Oh(nm_n^2)$ for the control variate method, a number which can be brought down to $\Oh(nm_n)$ in certain situations. A more balanced comparison arises when we allow the naive Monte Carlo method to compete on the basis of a larger sample size, matching computation times. Whether or not the investment in $m_n$ control variates is worth the effort then depends on the exact speed at which $\sigma_n$ tends to zero, as is illustrated by examples. 

In Section~\ref{sec:method}, we recall the method of control variates, highlighting a formulation in terms of projections which is useful later on. A central limit theorem when the number of control variates tends to infinity is developed in Section~\ref{sec:asym}. Its formulation allows for a sequence of integrands and for a triangular array of control variates. The balance between accelerated convergence rate and increased computational cost is investigated in Section~\ref{sec:cost}. Examples of families of control functions are presented in Section~\ref{sec:examples} while some concluding comments are given in Section~\ref{sec:discussion}. All proofs are relegated to Section~\ref{sec:proofs}.

\section{Control variates and orthogonal projections}
\label{sec:method}

\subsection{Control variates}

Let $(S, \mathcal{S}, P)$ be a probability space and let $f \in L^2(P)$ be a real function on $S$ of which we would like to calculate the integral $\mu = P( f ) = \int_S f(x) \, P(\diff x) $. Let $X_1, \ldots, X_n$ be an independent random sample from $P$ on a probability space $(\Omega, \mathcal{A}, \pr)$ and let $P_n$ be its empirical distribution. The Monte Carlo estimate of $\mu$ is $\hat{\mu}_n = P_n( f ) = n^{-1} \sum_{i=1}^n f(X_i)$. The Monte Carlo estimator is unbiased and has variance $\Var( \hat{\mu}_n ) = n^{-1} \sigma^2(f)$, where $\sigma^2(f) = P [ \{ f - P(f) \}^2 ]$. By the central limit theorem, $\sqrt{n} ( \hat{\mu}_n - \mu ) \dto \mathcal{N}(0, \sigma^2(f))$ as $n \to \infty$, where the arrow $\dto$ denotes convergence in distribution.

The use of control variates is one of many methods to reduce the asymptotic variance of the Monte Carlo estimator. Let $h_1, \ldots, h_m \in L^2(P)$ be functions with known expectations. Without loss of generality, assume that $P(h_j) = 0$ for all $j = 1, \ldots, m$. For every column vector $\beta \in \reals^m$, we obviously have $\mu = P( f - \beta' h )$, where $h = (h_1, \ldots, h_m)'$ is the column vector with the $m$ control functions as elements. But then $\hat{\mu}_n(\beta) = P_n ( f - \beta' h )$ is an unbiased estimator of $\mu$ too, with variance $\Var\{ \hat{\mu}_n(\beta) \} = n^{-1} \sigma^2( f - \beta' h )$.

The asymptotic variance $\sigma^2( f - \beta' h ) $ is minimal if $\beta$ is equal to
\begin{equation}
\label{eq:beta_opt}
\beta_{\mathrm{opt}}  =  P(hh')^{-1} \, P(hf).
\end{equation}
Here we assume that the functions $h_1, \ldots, h_m$ are linearly independent in $L^2(P)$, so that the $m \times m$ covariance matrix $P(hh') = (P(h_jh_k))_{j,k=1}$ is invertible. 
The minimal asymptotic variance is
\begin{equation}
\label{eq:sigmaopt}
\sigma^2( f - \beta_{\mathrm{opt}}' h )
=
\sigma^2(f) - P(f h') \, P(hh')^{-1} \, P(h f).
\end{equation}

In practice, $\beta_{\mathrm{opt}}$ in \eqref{eq:beta_opt} is unknown and needs to be estimated. Any estimator $\hat{\beta}_n$ of $\beta_{\mathrm{opt}}$ produces a control variate estimator: $\hat{\mu}_n( \hat{\beta}_n ) = P_n ( f - \hat{\beta}_n' h )$. As soon as $\hat{\beta}_n \dto \beta_{\mathrm{opt}}$, then \citep[Theorem~1]{glynn+s:2002},
\begin{align}
\sqrt{n} \{ \hat{\mu}_n( \hat{\beta}_n ) - \mu \}
&\dto
\mathcal{N} \left( 0, \, \sigma^2( f - \beta_{\mathrm{opt}}' h ) \right),
\qquad n \to \infty.
\label{eq:limitlaw}
\end{align}
It is thus sufficient to estimate the vector $\beta_{\mathrm{opt}}$ consistently to obtain an integration procedure with the same asymptotic distribution as the oracle procedure $\hat{\mu}_n(\beta_{\mathrm{opt}})$.

The asymptotic variance in \eqref{eq:sigmaopt} may be estimated by the empirical variance
\begin{equation*}
	\hat{\sigma}_n^2( \hat{\beta}_n )
	= P_n [ \{ f - \hat{\beta}_n' h \}^2 ] - \{P_n[f - \hat{\beta}_n' h]\}^2 .
\end{equation*}
If $\hat{\beta}_n \dto \beta_{\mathrm{opt}}$, then, by the law of large numbers and Slutsky's lemma,
\begin{equation}
\label{eq:sigmacons}
	\hat{\sigma}_n^2( \hat{\beta}_n )
	\dto \sigma^2( f - \beta_{\mathrm{opt}}' h ),
	\qquad n \to \infty.
\end{equation}
Equations~\eqref{eq:limitlaw} and~\eqref{eq:sigmacons} justify the usual asymptotic confidence intervals for $\mu$. 

\subsection{Ordinary least squares estimator}
\label{ss:OLS}

To estimate $\beta_{\mathrm{opt}} = P(hh')^{-1} \, P(hf)$, multiple options exist \citep{glynn+s:2002}. The more common estimator is
\begin{equation*}
	\hat{\beta}_n^{\mathrm{OLS}} 
	= G_n^{-1} \bigl\{ P_n(hf) - P_n(h) \, P_n(f) \bigr\},
\end{equation*}
where $G_n = P_n(hh') - P_n(h) \, P_n(h')$ is the empirical covariance matrix of the control variates, assumed to be invertible, which is the case with large probability under the conditions in Section~\ref{sec:asym}. The resulting ordinary least squares Monte Carlo estimator is
\begin{equation*}
  \hat{\mu}_n^{\mathrm{OLS}}
= \hat{\mu}_n(\hat{\beta}_n^{\mathrm{OLS}})
=  P_n(f) - \bigl\{ P_n(fh') - P_n(f) \, P_n(h') \bigr\} \, G_n^{-1} \, P_n(h).
\end{equation*}
The OLSMC variance estimator is equal to the sample analogue of \eqref{eq:sigmaopt}:
\begin{align*}
	\hat{\sigma}_{n,\mathrm{OLS}}^2   
	&= \hat{\sigma}_n^2( \hat{\beta}_n^{\mathrm{OLS}} ) \\
	&= P_n[\{f - P_n(f) \}^2]
	-
	\{ P_n(f h') - P_n(f) \, P_n(h') \} \, G_n^{-1} \, \{ P_n(h f) - P_n(h) \, P_n(f) \}.
\end{align*}

The terminology stems from the well-known \citep{Glasserman2003} property that
\begin{equation}
\label{eq:OLS}
( \hat{\mu}_n^{\mathrm{OLS}}, \hat{\beta}_n^{\mathrm{OLS}} )
=
\argmin_{(\alpha, \beta) \in \reals \times \reals^m} \sum_{i=1}^n \{ f(X_i) - \alpha - \beta' h(X_i) \}^2.
\end{equation}
The identity \eqref{eq:OLS} is a consequence of the normal equations in the multiple linear regression model
\[
	f(X_i) = \mu + \beta_{\mathrm{opt}}' h(X_i) + \eps_i, 
	\qquad i = 1, \ldots, n,
\]
with dependent variable $f(X_i)$, explanatory variables $h_1(X_i), \ldots, h_m(X_i)$, and errors $\eps_i$. The intercept is $\mu$ whereas the vector of regression coefficients is $\beta_{\mathrm{opt}}$. The errors are $\eps_i = \eps(X_i)$ with 
$\eps = f - \mu - \beta_{\mathrm{opt}}' h \in L^2(P)$, a mean-zero function which is uncorrelated with each of the control functions, i.e., $P(\eps) = 0$ and $P(h \eps) = 0$. The variance of the errors is equal to the asymptotic variance of the OLSMC estimator: $P(\eps^2) = \sigma^2(f - \beta_{\mathrm{opt}}'h)$.

Equation~\eqref{eq:OLS} has the convenient consequence that the OLSMC estimator and the variance estimator can be computed via standard linear regression software \citep[Section~8.9]{mcbook}. Also, it implies that the OLSMC integration rule integrates the constant function and the $m$ control functions exactly.

\subsection{Orthogonal projections}

Geometric considerations lead to another, insightful representation of the OLSMC estimator, revealing properties relevant for asymptotic theory. Let $H^{(n)}$ be the $n \times m$ matrix
\begin{equation}
\label{eq:hn}
H^{(n)} = 
\begin{pmatrix}
h_1(X_1) & \ldots & h_m(X_1) \\
\vdots & & \vdots \\
h_1(X_n) & \ldots & h_m(X_n)
\end{pmatrix}.
\end{equation}
Let $\Pi_{n,m}$ be the $n \times n$ projection matrix on the column space of the matrix $H^{(n)}$ in \eqref{eq:hn}. If the $m$ columns of $H^{(n)}$ are linearly independent, then 
\begin{equation}
\label{eq:Pinm}
\Pi_{n,m} 
= H^{(n)} \{ (H^{(n)})' H^{(n)} \}^{-1} (H^{(n)})'
= n^{-1} H^{(n)} \, P_n(hh')^{-1} \, (H^{(n)})',
\end{equation}
the so-called hat matrix in a multiple linear regression model without intercept on the $m$ variables $(h_j(X_i))_{i=1}^n$, $j = 1, \ldots, m$. Even if the $m$ columns of $H^{(n)}$ are not linearly independent, the projection matrix $\Pi_{n,m}$ is well-defined, for instance, by using Moore--Penrose inverses.

Write the OLSMC estimator in \eqref{eq:OLS} in terms of two nested minimization problems:
\[
\hat{\mu}_n^{\mathrm{OLS}}
=
\argmin_{\alpha \in \reals} 
\left[ 
\min_{\beta \in \reals^m} 
\sum_{i=1}^n \{ f(X_i) - \alpha - \beta' h(X_i) \}^2
\right].
\]
Given $\alpha \in \reals$, the minimum over $\beta \in \reals^m$ is well-defined and is attained as soon as $\beta$ satisfies $ H^{(n)} \beta = \Pi_{n,m} (f^{(n)} - \alpha 1_n)$, where $f^{(n)} = (f(X_1), \ldots, f(X_n))'$ and where $1_n$ is an $n \times 1$ vector with all elements equal to $1$. We find that
\begin{equation}
\label{eq:mu_OLS:nu}
\hat{\mu}_n^{\mathrm{OLS}}
=
\argmin_{\alpha \in \reals} 
| (I_n - \Pi_{n,m})(f^{(n)} - \alpha 1_n) |^2,
\end{equation}
where $| v | = (v' v)^{1/2}$ is the Euclidean norm of a vector $v$ and $I_n$ is the $n \times n$ identity matrix. It follows that $\alpha (I_n - \Pi_{n,m}) 1_n$ is equal to the orthogonal projection of $(I_n - \Pi_{n,m})f^{(n)}$ on the line passing through the origin and $(I_n - \Pi_{n,m}) 1_n$. A necessary and sufficient condition for the uniqueness of $\alpha \in \reals$ is that $(I_n - \Pi_{n,m}) 1_n$ is not equal to the zero vector, that is, $1_n$ is \emph{not} an element of the column space of $H^{(n)}$. Suppose this condition holds. 
Then
$1_n'(I_n - \Pi_{n,m})1_n = | (I_n - \Pi_{n,m}) 1_n |^2 > 0$ and
\begin{equation}
\label{eq:mu_OLS:Pi}
\hat{\mu}_n^{\mathrm{OLS}}
=
\frac{(f^{(n)})' (I_n - \Pi_{n,m}) 1_n}{1_n' (I_n - \Pi_{n,m}) 1_n}.
\end{equation}

If, in addition, the columns of $H^{(n)}$ are linearly independent, then, by \eqref{eq:Pinm},
\begin{equation}
\label{eq:mu_OLS:2}
\hat{\mu}_n^{\mathrm{OLS}}
=
\frac%
{P_n(f) - P_n(fh') \, P_n(hh')^{-1} \, P_n(h)}%
{1 - P_n(h') \, P_n(hh')^{-1} \, P_n(h)}.
\end{equation}
Indeed, we have $(f^{(n)})' 1_n = n \, P_n(f)$, $(f^{(n)})' H^{(n)} = n \, P_n(f h')$, and $1_n' H^{(n)} = n \, P_n(h')$. 

We have supposed that the $n \times 1$ vector $1_n$ is \emph{not} an element of the column space of $H^{(n)}$. If it is, then there obviously cannot exist a weight vector such that the corresponding linear integration rule integrates both the constant functions and the control functions exactly. Also, the minimizer $\alpha$ in \eqref{eq:OLS} is then no longer identifiable. In that case, we recommend to reduce the number of control functions. Actually, when $m$ is not too large with respect to $n$ (Section~\ref{sec:asym}), the denominator in \eqref{eq:mu_OLS:2} tends to $1$ in probability, implying that, with probability tending to one, $1_n$ is \emph{not} an element of the column space of $H^{(n)}$.

The representation \eqref{eq:mu_OLS:Pi} also implies that the OLSMC estimator does not change if we replace the vector $h$ of control functions by the vector $A h$, where $A$ is an arbitrary invertible $m \times m$ matrix. Indeed, such a transformation results in changing the matrix $H^{(n)}$ in \eqref{eq:hn} into $H^{(n)} A'$, but both $n \times m$ matrices share the same column space.

The OLSMC variance estimator  $\hat{\sigma}_{n,\mathrm{OLS}}^2$ coincides with $n^{-1}$ times the minimal sum of squares in \eqref{eq:OLS} and \eqref{eq:mu_OLS:nu}:
\begin{equation}
\label{eq:sigma:proj}
\hat{\sigma}_{n,\mathrm{OLS}}^2
=
\frac{1}{n} 
(f^{(n)} - \hat{\mu}_n^{\mathrm{OLS}} 1_n)'
(I_n - \Pi_{n,m}) 
(f^{(n)} - \hat{\mu}_n^{\mathrm{OLS}} 1_n).
\end{equation}
Recall $f = \mu + \beta_{\mathrm{opt}}' h + \eps$, where $\eps \in L^2(P)$ is centered and uncorrelated with all control functions $h_j$. If $P_n(hh')$ is invertible and $P_n(h') P_n(hh')^{-1} P_n(h) < 1$, we can use \eqref{eq:Pinm} for $\Pi_{n,m}$ and \eqref{eq:mu_OLS:2} for $\hat{\mu}_{n}^{\mathrm{OLS}}$ to work out \eqref{eq:sigma:proj} and find (proof in Section~\ref{sec:proofs})
\begin{multline}
\label{eq:sigma:handy}
\hat{\sigma}_{n,\mathrm{OLS}}^2
=
P_n(\eps^2) - P_n( \eps h' ) \, P_n(h h')^{-1} \, P_n(h \eps) \\
- 
(\hat{\mu}_n^{\mathrm{OLS}} - \mu)^2 \{1 - P_n(h') P_n(hh')^{-1} P_n(h)\}.
\end{multline}
Since $\hat{\sigma}_{n,\mathrm{OLS}}^2 \le P_n(\eps^2)$ and $\expec\{P_n(\eps^2)\} = \sigma^2$, it follows that $\hat{\sigma}_{n,\mathrm{OLS}}^2$ has a negative bias. In view of the multiple linear regression perspective in Section~\ref{ss:OLS} and to possibly reduce this bias, one may prefer to multiply the variance estimator by $n / (n-m-1)$, although this particular correction is justified only in case of a linear model with fixed design and centered, uncorrelated, and homoskedastic Gaussian errors.

\section{Central limit theorem for a growing number of control variates}
\label{sec:asym}

By~\eqref{eq:limitlaw}, the asymptotic variance of the OLSMC estimator $\hat{\mu}_n^{\mathrm{OLS}}$ of $\mu = P(f)$ with a fixed number of control variates is equal to the variance of the error variable 
\begin{equation}
\label{eq:error_variable}
	\eps = f - \mu - \beta_{\mathrm{opt}}' h, 
\end{equation}
where $\mu + \beta_{\mathrm{opt}}' h$ is the orthogonal projection in $L^2(P)$ of $f$ on the linear space $\mathcal{F}_m$ spanned by $\{1, h_1, \ldots, h_m\}$. Suppose that the number, $m = m_n$, of control functions varies with $n$ and tends to infinity and that $f$ can be written as an $L^2(P)$ limit of a sequence of approximating functions in $\mathcal{F}_{m_n}$. Then $\sigma_n^2 = P(\eps_n^2) \to 0$ as $n \to \infty$, where $\eps_n$ is the error variable $\eps$ in \eqref{eq:error_variable} when there are $m_n$ control variates in use. Then we may hope that the asymptotic variance of the OLSMC estimator becomes zero too, so that its convergence rate is $\oh_{\pr}(1/\sqrt{n})$, faster than the one of the Monte Carlo estimator. More precisely, we may hope to pin the convergence rate down to $\Oh_{\pr}( \sigma_n / \sqrt{n} )$.

\subsection{Set-up}
\label{subsec:set-up}

Our set-up is a triangular array of control functions. Let $h_n = (h_{n,1}, \ldots, h_{n,m_n})'$ for some positive integer sequence $m_n \to \infty$, where $h_{n,j} \in L^2(P)$ and $P(h_{n,j}) = 0$ for all $n$ and $j$. Assume that $h_{n,1}, \ldots, h_{n,m_n}$ are linearly independent in $L^2(P)$, so that the $m_n \times m_n$ Gram matrix $P(h_n h_n') = \left( P(h_{n,j} h_{n,k}) \right)_{j,k}$ is invertible. Examples of control functions we have in mind are polynomials or trigonometric functions, in which case a single sequence $h_1, h_2, \ldots$ would suffice, or spline functions on an interval with the knots forming a grid depending on $m_n$, an example which requires a triangular array of control functions.

There is no additional mathematical cost to let the integrands depend on $n$ as well: we want to calculate the integral $\mu_n = P(f_n)$ of $f_n \in L^2(P)$. In doing so, we obtain results that are locally uniform in the integrand. We have $f_n = \mu_n + \beta_n' h_n + \eps_n$ for some vector $\beta_n \in \reals^{m_n}$ determined by the orthogonality equations $P(\eps_n h_{n,j}) = 0$ for all $j = 1, \ldots, m_n$. We have $P(\eps_n) = 0$, while the error variance is $\sigma_n^2 = P(\eps_n^2)$. To avoid trivialities, we assume that $\sigma_n^2 > 0$, that is, $f_n$ is not equal to a constant plus a linear combination of the control functions, in which case its integral would be known. Of particular interest is the case where $\sigma_n^2 \to 0$ as $n \to \infty$, although we do not impose this.


\subsection{Leverage condition}

Consider the linear regression model without intercept term for the centered integrand on the control variates:
\[
	f_n(X_i) - \mu_n = h_n'(X_i) \beta_n + \eps_n(X_i),
	\qquad i = 1, \ldots, n.
\]
The $n \times m_n$ design matrix is $H^{(n)}$ in \eqref{eq:hn}, whereas the $n \times n$ projection matrix onto the column space of $H^{(n)}$ is $\Pi_n \equiv \Pi_{n,m_n}$ in \eqref{eq:Pinm}, assuming that the $m_n$ columns of $H^{(n)}$ are linearly independent. In multiple linear regression theory, this projection matrix is called the \emph{hat matrix}, and its $i$th diagonal element is called the \emph{leverage} of the $i$th sample point:
\[
	\Pi_{n,ii} = n^{-1} h_n(X_i)' \, P_n(h_n h_n')^{-1} \, h_n(X_i), 
	\qquad i = 1, \ldots, n.
\]
The average leverage is equal to $n^{-1} \trace(\Pi_n) = m_n/n$. Points for which $\Pi_{n,ii} > c m_n/n$ for some pre-determined constant $c > 1$, often $c = 2$ or $c = 3$, are commonly flagged as high-leverage points; see \cite{velleman+w:1981} and the references therein.

We have $\Pi_{n,ii} = n^{-1} \hat{q}_n(X_i)$ for $i = 1, \ldots, n$, where $\hat{q}_n(x) = h_n(x)' \, P_n(h_n h_n')^{-1} \, h_n(x)$ is the sample version of what could be called the \emph{leverage function}
\begin{equation}
\label{eq:q}
	q_n(x) = h_n(x)' \, P(h_n h_n')^{-1} \, h_n(x), 
	\qquad x \in S.
\end{equation}
Note that $q_n(x)$ is the squared Mahalanobis distance of $h_n(x)$ to the center $P(h_n) = 0$ of the distribution of the $m_n$-dimensional random vector $h_n$ under $P$. The expectation of the leverage function is equal to the dimension of the control space,
\begin{equation}
\label{eq:Pq}
P(q_n) = m_n.
\end{equation}
Recall that the OLSMC estimator does not change if we replace the vector $h_n$ by the vector $A h_n$, where $A$ is any invertible $m_n \times m_n$ matrix. The function $q_n$ is invariant under such transformations of the control functions, as can be easily checked. It follows that $q_n$ is linked to the linear space spanned by the control functions $h_{n,1}, \ldots, h_{n,m_n}$ rather than to the functions themselves.

To establish the rate of convergence of the OLSMC estimator, we need to prohibit the occurrence of points of which the leverage is too high. The criterion commonly used in regression diagnostics to flag high-leverage points would suggest that we impose that $\sup_{x \in S} q_n(x) = \Oh(m_n)$ as $n \to \infty$. [By \eqref{eq:Pq}, a smaller bound can never be satisfied.] Instead, we impose a weaker condition, which is reminiscent of Assumption~2(ii) in \cite{newey:1997}.

\begin{condition}[Leverage]
	\label{cond:q}
	We have
	\begin{equation}
	\label{eq:q:suff}
	\sup_{x \in S} q_n(x) = \oh(n/m_n), \qquad n \to \infty.
	\end{equation}
\end{condition}
Equations~\eqref{eq:Pq} and~\eqref{eq:q:suff} imply
\begin{equation}
\label{eq:on}
	P(q_n^2) = \oh(n), \qquad n \to \infty.
\end{equation}
Since $m_n^2 = P(q_n)^2 \le P(q_n^2)$, Equation~\eqref{eq:on} implies that $m_n = \oh(n^{1/2})$, restricting the dimension of the control space. As a consequence, also $m_n = \oh(n/m_n)$, meaning that Equation~\eqref{eq:q:suff} is indeed weaker than $\sup_{x \in S} q_n(x) = \Oh(m_n)$ as $n \to \infty$.

According to \cite{huber:1981}, the reciprocal of the leverage can be seen as the equivalent number of observations entering into the determination of the predicted response for the $i$th point. Since our condition implies that $\sup_{x \in S} n^{-1} q_n(x) = \oh(1/m_n)$ as $n \to \infty$, a possible interpretation of Condition~\ref{cond:q} is that the equivalent number of observations used to predict each response is of larger order than the number of control variates, $m_n$.




\subsection{Main results}

Assume the set-up of Section~\ref{subsec:set-up}.

\begin{theorem}[Rate]
	\label{thm:rate}
	If Condition~\ref{cond:q} holds, then, as $n \to \infty$, the OLSMC estimator is well-defined with probability tending to one and
	\begin{equation}
	\label{eq:AN:prob}
	\frac{\sqrt{n}}{\sigma_{n}} \left( \hat{\mu}_n^{\mathrm{OLS}} - \mu_n \right)
	=
	\frac{\sqrt{n}}{\sigma_{n}} P_n( \eps_n ) + \oh_{\pr}(1)
	=
	\Oh_{\pr}(1).
	\end{equation}
	In particular, $\hat{\mu}_n^{\mathrm{OLS}} - \mu_n = \Oh_{\pr}( \sigma_{n} / \sqrt{n} )$ as $n \to \infty$.
\end{theorem}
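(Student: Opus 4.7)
The starting point is the closed-form expression \eqref{eq:mu_OLS:2} for $\hat{\mu}_n^{\mathrm{OLS}}$, valid on the event $E_n$ that $P_n(h_n h_n')$ is invertible and that $P_n(h_n')\,P_n(h_n h_n')^{-1}\,P_n(h_n) < 1$. Substituting $f_n = \mu_n + \beta_n' h_n + \eps_n$ into that formula, the $\mu_n$ contributions factor as $(1-d)\mu_n$ with $d = P_n(h_n')P_n(h_n h_n')^{-1}P_n(h_n)$, while the $\beta_n' h_n$ contributions collapse through $P_n(h_n h_n')^{-1}P_n(h_n h_n') = I_{m_n}$. What remains on $E_n$ is the identity
\begin{equation*}
\hat{\mu}_n^{\mathrm{OLS}} - \mu_n
=
\frac{P_n(\eps_n) \,-\, P_n(\eps_n h_n') \, P_n(h_n h_n')^{-1} \, P_n(h_n)}
{1 \,-\, P_n(h_n') \, P_n(h_n h_n')^{-1} \, P_n(h_n)}.
\end{equation*}
Proving \eqref{eq:AN:prob} therefore reduces to four estimates: $\pr(E_n) \to 1$, the denominator tends to $1$ in probability, the numerator correction is $\oh_\pr(\sigma_n/\sqrt{n})$, and $P_n(\eps_n) = \Oh_\pr(\sigma_n/\sqrt{n})$; the last of these is immediate from $\expec[P_n(\eps_n)^2] = \sigma_n^2/n$.

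For the control-variate inputs I exploit the invariance of both $\hat{\mu}_n^{\mathrm{OLS}}$ and the leverage function $q_n$ under invertible linear transformations of $h_n$, as noted after \eqref{eq:mu_OLS:Pi} and \eqref{eq:q}: without loss of generality assume $P(h_n h_n') = I_{m_n}$, so that $q_n(x) = |h_n(x)|^2$ and Condition~\ref{cond:q} reads $\sup_{x\in S} |h_n(x)|^2 = \oh(n/m_n)$. Three moment calculations now carry the argument. First, $\expec[|P_n(h_n)|^2] = n^{-1} P(q_n) = m_n/n$, so $|P_n(h_n)|^2 = \Oh_\pr(m_n/n)$. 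Second, an entrywise Frobenius bound gives
\[
\expec\bigl[\|P_n(h_n h_n') - I_{m_n}\|_F^2\bigr] \leq n^{-1} P(q_n^2) = \oh(1)
\]
by \eqref{eq:on}, so on an event of probability tending to one $P_n(h_n h_n')$ is invertible with inverse of operator norm at most $2$. Third, using $P(\eps_n h_n) = 0$ componentwise,
\[
\expec\bigl[|P_n(\eps_n h_n')|^2\bigr]
\leq n^{-1} P(\eps_n^2 q_n)
\leq n^{-1} \sigma_n^2 \sup_{x\in S} q_n(x)
= \oh(\sigma_n^2/m_n).
\]

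Assembling these bounds, $P_n(h_n')P_n(h_n h_n')^{-1}P_n(h_n) \leq \|P_n(h_n h_n')^{-1}\|_{\mathrm{op}} \,|P_n(h_n)|^2 = \Oh_\pr(m_n/n) = \oh_\pr(1)$, which simultaneously gives $\pr(E_n) \to 1$ and the denominator tending to $1$. Cauchy--Schwarz combined with the three bounds yields
\[
|P_n(\eps_n h_n') \, P_n(h_n h_n')^{-1} \, P_n(h_n)|
\leq
\oh_\pr\!\bigl(\sigma_n/\sqrt{m_n}\bigr) \cdot \Oh_\pr(1) \cdot \Oh_\pr\!\bigl(\sqrt{m_n/n}\bigr)
= \oh_\pr\!\bigl(\sigma_n/\sqrt{n}\bigr),
\]
and Slutsky's lemma then delivers the expansion \eqref{eq:AN:prob} as well as the stochastic boundedness of $\sqrt{n}(\hat{\mu}_n^{\mathrm{OLS}} - \mu_n)/\sigma_n$.

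The main technical hurdle is the Frobenius-norm estimate on $P_n(h_n h_n') - I_{m_n}$: it is this step that forces the dimension restriction $m_n = \oh(n^{1/2})$ through \eqref{eq:on} and explains why the weaker form \eqref{eq:q:suff} of the leverage condition, rather than a uniform $\Oh(m_n)$ bound on $q_n$, is exactly what is required. Once this bound is in hand, the remaining estimates are elementary variance bookkeeping that exploits the built-in orthogonality relations $P(\eps_n) = 0$ and $P(\eps_n h_{n,j}) = 0$ together with the moment bound $P(q_n^2) = \oh(n)$.
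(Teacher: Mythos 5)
Your proposal is correct and follows essentially the same route as the paper's proof: the same algebraic identity for $\hat{\mu}_n^{\mathrm{OLS}}-\mu_n$ obtained by substituting $f_n=\mu_n+\beta_n'h_n+\eps_n$ into \eqref{eq:mu_OLS:2}, the same reduction to orthonormalized control variates (the paper makes this explicit via $\hbar_n=B_nh_n$), and the same three moment bounds ($\expec|P_n(h_n)|^2=m_n/n$, the Frobenius-norm control of $P_n(h_nh_n')-I_{m_n}$ via $n^{-1}P(q_n^2)$, and $\expec|P_n(\eps_nh_n')|^2\le n^{-1}P(q_n\eps_n^2)$) assembled by Cauchy--Schwarz exactly as in Lemma~\ref{lem:cond1:consequences} and the displays \eqref{eq:Pnhn:ortho}, \eqref{eq:Gram:Frobenius}, \eqref{eq:Pheps:ortho}.
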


To prove asymptotic normality of the estimation error, we apply the Lindeberg--Feller central limit theorem. The Lindeberg condition, which is both necessary and sufficient \cite[Theorem~5.12]{kallenberg:2002}, also guarantees consistency of the OLS variance estimator. A sufficient but not necessary condition as well as some intuition are provided in Remark~\ref{rem:Lindeberg} below. Recall that the arrow $\dto$ denotes weak convergence.

\begin{condition}[Lindeberg]
	\label{cond:Lindeberg}
	For every $\delta > 0$, we have, as $n \to \infty$,
	\[
	P[ (\eps_n/\sigma_n)^2 \, \1 \{ \lvert\eps_n/\sigma_n\rvert > \delta \sqrt{n} \} ] = \oh(1).
	\]
\end{condition}

\begin{theorem}[Asymptotic normality]
	\label{thm:AN}
	Suppose Condition~\ref{cond:q} holds. Then Condition~\ref{cond:Lindeberg} holds if and only if
	\begin{equation}
	\label{eq:AN}
	\frac{\sqrt{n}}{\sigma_{n}} \left( \hat{\mu}_n^{\mathrm{OLS}} - \mu_n \right) \dto \mathcal{N}(0, 1),
	\qquad n \to \infty.
	\end{equation}
	Moreover, under Conditions~\ref{cond:q} and~\ref{cond:Lindeberg}, the variance estimator is consistent in the sense that
	\begin{equation} 
	\label{eq:sigma:consistent}
	\hat{\sigma}_{n,\mathrm{OLS}}^2 / \sigma_n^2 \dto 1, \qquad n \to \infty.
	\end{equation}
	Equation~\eqref{eq:AN} thus remains true if $\sigma_n$ is replaced by $\hat{\sigma}_{n,\mathrm{OLS}}$.
\end{theorem}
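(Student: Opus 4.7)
The equivalence between Condition~\ref{cond:Lindeberg} and the convergence in \eqref{eq:AN} reduces, via the expansion \eqref{eq:AN:prob} of Theorem~\ref{thm:rate}, to the analogous statement for the linear statistic $(n\sigma_n^2)^{-1/2}\sum_{i=1}^n \eps_n(X_i)$. The summands $\eps_n(X_i)/(\sqrt{n}\sigma_n)$ form a centered, row-wise i.i.d.\ triangular array with sum of variances equal to~$1$ and individual variance $1/n \to 0$. The plan is to invoke the Lindeberg--Feller theorem in its sharp form \cite[Theorem~5.12]{kallenberg:2002}: in this regime, asymptotic normality is equivalent to the Lindeberg condition, which here is precisely Condition~\ref{cond:Lindeberg}.

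For the variance estimator I would start from \eqref{eq:sigma:handy} and analyse each of its three terms after division by $\sigma_n^2$. First, a triangular-array weak law of large numbers for $\eps_n^2(X_i)/\sigma_n^2$ gives $P_n(\eps_n^2)/\sigma_n^2 \to 1$ in probability: truncating at level $\delta^2 n$, Condition~\ref{cond:Lindeberg} handles the upper-tail contribution to the mean and Chebyshev's inequality bounds the variance of the truncated average by $\delta^2$, after which sending $\delta \downarrow 0$ closes the argument. For the quadratic middle term I would pass to the orthonormalised basis $\tilde h_n = P(h_n h_n')^{-1/2} h_n$, for which $P(\tilde h_n \tilde h_n') = I_{m_n}$ and $q_n(x) = |\tilde h_n(x)|^2$; the term then reads $V_n' M_n V_n$ with $V_n = P_n(\tilde h_n \eps_n)$ and $M_n = P_n(\tilde h_n \tilde h_n')^{-1}$. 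Two bounds suffice: the Frobenius estimate
\[
\expec \, \| P_n(\tilde h_n \tilde h_n') - I_{m_n} \|_F^2
\le n^{-1} P(q_n^2) = \oh(1),
\]
which uses \eqref{eq:on}, shows that the operator norm of $M_n$ equals $1 + \oh_{\pr}(1)$, while the orthogonality $P(\tilde h_n \eps_n) = 0$ combined with Condition~\ref{cond:q} gives
\[
\expec |V_n|^2 = n^{-1} P(q_n \eps_n^2) \le n^{-1} \sup_{x \in S} q_n(x) \cdot \sigma_n^2 = \oh(\sigma_n^2/m_n).
\]
The middle term is therefore $\oh_{\pr}(\sigma_n^2)$. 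The last term in \eqref{eq:sigma:handy} is $\Oh_{\pr}(\sigma_n^2/n)$ by Theorem~\ref{thm:rate} and hence a fortiori $\oh_{\pr}(\sigma_n^2)$.

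Combining the three estimates gives \eqref{eq:sigma:consistent}, after which the self-normalised form of \eqref{eq:AN} follows from Slutsky's lemma applied jointly to \eqref{eq:AN} and \eqref{eq:sigma:consistent}. The main technical obstacle is the middle term in \eqref{eq:sigma:handy}, where the $m_n \times m_n$ sample Gram matrix must be inverted: Condition~\ref{cond:q} has to work on two fronts, providing operator-norm closeness between $P_n(h_n h_n')$ and $P(h_n h_n')$ via \eqref{eq:on} and supplying the pointwise bound on $\sup_x q_n(x)$ needed to keep $\expec|V_n|^2$ below the possibly shrinking normalisation $\sigma_n^2$.
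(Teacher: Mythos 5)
Your proposal is correct and follows essentially the same route as the paper: the Lindeberg--Feller theorem in its necessary-and-sufficient form combined with the expansion \eqref{eq:AN:prob}, then the decomposition \eqref{eq:sigma:handy} with the orthonormalised controls, the Frobenius bound $\expec\lvert P_n(\hbar_n\hbar_n')-I_{m_n}\rvert_F^2 \le n^{-1}P(q_n^2)$, and the moment bound $\expec\lvert P_n(\hbar_n\eps_n)\rvert^2 = n^{-1}P(q_n\eps_n^2) = \oh(\sigma_n^2/m_n)$. The only cosmetic difference is that you prove the triangular-array weak law for $\eps_n^2/\sigma_n^2$ by a direct truncation-plus-Chebyshev argument, whereas the paper packages the identical idea as Proposition~\ref{prop:LLN} via a citation to a textbook weak law.
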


Theorem~\ref{thm:AN} justifies the use of the usual asymptotic confidence intervals of nominal coverage $1-\alpha$ of the form $\hat{\mu}_{n,\mathrm{OLS}} \pm z_{1-\alpha/2} \, \hat{\sigma}_{n,\mathrm{OLS}} / \sqrt{n}$, where $z_p$ is the $p$th quantile of the standard normal distribution. As in multiple linear regression, quantiles of the Student $t$ distribution with $n-m_n-1$ degrees of freedom may be used instead, making the intervals a bit wider, although there is no guarantee that this will bring the real coverage closer to the nominal one when the errors are not normally distributed.

\subsection{Discussion}

\begin{remark}[Weakening the leverage condition]
	Equation~\eqref{eq:q:suff} implies
	\begin{equation}
	\label{eq:onm}
	P(q_n \eps_n^2) = \oh\{ (n/m_n) \sigma_n^2 \},
	\qquad n \to \infty.
	\end{equation}
	In fact, Theorems~\ref{thm:rate} and~\ref{thm:AN} would remain true if Condition~\ref{cond:q} would be replaced by the weaker pair of Equations~\eqref{eq:on} and~\eqref{eq:onm}. In addition, by the Cauchy--Schwarz inequality, $P(q_n \eps_n^2) \le P(q_n^2)^{1/2} P(\eps_n^4)^{1/2}$, so that Equation~\eqref{eq:on} together with
	\begin{equation} 
	\label{eq:eps4}
		P(\eps_n^{4}) = \Oh\{ (n/m_n^2) \sigma_n^4 \},
		\qquad n \to \infty,
	\end{equation}
	would be sufficient. However, both Equations~\eqref{eq:onm} and~\eqref{eq:eps4} depend on the integrand through the error function $\eps_n$ and may be difficult to check. The advantage of Equation~\eqref{eq:q:suff} is that it only depends on the control variates and not on the integrand.
\end{remark}

\begin{remark}[Checking the leverage condition]
	\label{rem:leverage}
	When calculating $q_n$ is complicated, the following bound may be helpful in establishing \eqref{eq:q:suff}: we have 
	$$
		q_n 
		\le \lambda_{n,1}^{-1} h_n' h_n 
		= \lambda_{n,1}^{-1} \sum_{j=1}^{m_n} h_{n,j}^2,
	$$
	where $\lambda_{n,1} > 0$ is the smallest eigenvalue of $P(h_n h_n')$. See also Section~\ref{sec:examples} for a number of examples in which we check the leverage condition.
\end{remark}

\begin{remark}[On the Lindeberg condition]
	\label{rem:Lindeberg}
	As already mentioned, Condition~\ref{cond:Lindeberg} is both necessary and sufficient for \eqref{eq:AN} to hold. In view of H\"{o}lder's inequality, the condition is implied by the Lyapunov condition that there exists $\eta > 0$ such that 
	\[ 
		\sup_{n \ge 1} P[ \lvert \eps_n/\sigma_n \rvert^{2+\eta}] < \infty.
	\]
	The latter condition is equivalent to $\lVert \eps_n \rVert_{2+\eta} = \Oh( \lVert \eps_n \rVert_{2} )$ as $n \to \infty$, where $\lVert \, \cdot \, \rVert_{p}$ denotes the $L_p(P)$ (semi-)norm. 
	
	Intuitively, the Lindeberg condition requires that the error sequence $\eps_n$ behaves regularly in some sense. It fails for instance if, along a subsequence, the centered integrand $f_n - P(f_n)$ is a linear combination of the control functions $h_{n,1}, \ldots, h_{n,m_n}$: if the fit is perfect ($\sigma_n = 0$), the integration error is zero and cannot be normalized to be asymptotically standard Gaussian. See Example~\ref{ex:Liouville} in Section~\ref{sec:examples} below for an illustration on checking the Lindeberg condition.
\end{remark}

\section{Computational cost}
\label{sec:cost}

For pure Monte Carlo integration, the main computational cost stems from the $n$ evaluations of the integrand $f$. The computation time is therefore of the order $\Oh(n)$. Decreasing the integration error then simply amounts to increase the number, $n$, of random evaluation points $X_i$.

Another way to improve the integration accuracy is by increasing the number of control variates. For fixed sample size $n$, this will decrease the standard deviation $\sigma_n = \{P(\eps_n^2)\}^{1/2}$ of the error term $\eps_n \in L^2(P)$ in the representation
\[
	f = \mu + \beta_{n,1} h_{n,1} + \cdots + \beta_{n,m_n} h_{n,m_n} + \eps_n,
\]
with $\beta_n \in \reals^{m_n}$ determined by $P(\eps_n) = 0$ and $P(\eps_n h_{n,j}) = 0$ for all $j = 1, \ldots, m_n$. 

However, the use of $m_n$ control variates makes the number of operations go up to $\Oh(nm_n^2)$. The bottleneck comes from the $m_n \times m_n$ empirical Gram matrix $P_n(h_nh_n')$, each element of which requires calculating an arithmetic mean over the $n$ sample points. The other terms in \eqref{eq:mu_OLS:2} require fewer operations. Indeed, evaluating the $m_n$ control variates $h_{n,j}$ in the $n$ sample points $X_i$ amounts to $\Oh(nm_n)$ operations. The vectors $P_n(h_n)$ and $P_n(fh_n')$ contain $m_n$ elements, each of which is an arithmetic mean over the Monte Carlo sample, requiring $\Oh(nm_n)$ operations too. The matrix inversion and matrix multiplication in \eqref{eq:mu_OLS:2} represent $\Oh(m_n^3)$ operations. Since necessarily $m_n^2 = \oh(n)$ by~\eqref{eq:Pq} and~\eqref{eq:on}, the latter represents an additional cost of only $\oh(nm_n)$ operations.

The method of control variates thus invests $\Oh(nm_n^2)$ operations to achieve an asymptotic standard deviation of $\sigma_n n^{-1/2}$. Alternatively, one could allocate all computation resources to augmenting the Monte Carlo sample size from $n$ to $nm_n^2$, yielding a standard deviation of the order $\Oh(n^{-1/2} m_n^{-1})$. At equal computational budget, the method of control variates with the number of control variates tending to infinity will thus converge at a faster rate than naive Monte Carlo integration as soon as
\begin{equation}
\label{eq:whenfaster}
	\sigma_n = \oh(m_n^{-1}), \qquad n \to \infty. 
\end{equation}
Whether or not this is the case depends on the control variates and the integrand; see the examples in Section~\ref{sec:examples}.

For certain families of control variates, the computational cost can be brought down from $\Oh(nm_n^2)$ to $\Oh(nm_n)$. This is the case for instance for the normalized indicator functions in Example~\ref{ex:indicator} below and more generally for control variates that arise from functions that, prior to centering, have localized supports, such as splines or wavelets. In such cases, only $\Oh(m_n)$ elements of the $m_n \times m_n$ matrix $P_n(h_n h_n')$ vary with the sample, while the other elements are known in advance and thus non-random. Comparing the asymptotic standard deviation $\sigma_n n^{-1/2}$ of the control variate error with the one of the naive Monte Carlo method at sample size $n m_n$, which is $\Oh(n^{-1/2} m_n^{-1/2})$, we find that, at equal computational budget, the OLSMC estimator already converges at a faster rate than the Monte Carlo estimator as soon as
\begin{equation}
\label{eq:whenfaster2}
	\sigma_n = \oh(m_n^{-1/2}), \qquad n \to \infty.
\end{equation}

If evaluating the integrand $f$ is expensive while evaluating the control functions $h_j$ is cheap, then, in practice, it may still be computationally beneficial to increase the number of control variates rather than the Monte Carlo sample size, even though this is not backed up by the asymptotic considerations so far.

Computational benefits can also occur when there are multiple integrands. Indeed, it is well known that the method of control variates can be seen as a form of weighted Monte Carlo, i.e.,
\[
	\hat{\mu}_n^{\mathrm{OLS}}
	=
	\textstyle{\sum_{i=1}^n} w_{n,i} f(X_i)
\]
where the expression of the weight vector $w_n \in \reals^n$ can for instance be deduced from \eqref{eq:mu_OLS:Pi}; see also \citep[eq.~(4.20)]{Glasserman2003}. The control variates only enter the formula through these weights, which, even in case of multiple integrands, thus need to be computed only once. This feature can for instance be put to work to efficiently estimate quantiles \citep{HesterbergNelson1998}, price financial options \citep{glasserman+y:2005}, and compute likelihoods arising in statistical models with latent variables \citep{portier+s:2018v3}. 

\section{Examples}
\label{sec:examples}

\begin{example}[Post-stratification]
	\label{ex:indicator}
	On $S = [0, 1]$ equipped with the Lebesgue measure $P$, let $h_{n,j}(x) = (m_n+1) \1\{ x \in \mathcal{I}_{m_n,j} \} - 1$ for $j = 1, \ldots, m_n$, where $\mathcal{I}_{m_n,j} = [(j-1)/(m_n+1), j/(m_n+1))$. The control variates are normalized indicator functions induced by a partition of $[0, 1]$ into $m_n + 1$ intervals of equal length. Note that the last cell $\mathcal{I}_{m_n,m_n+1} = [m_n / (m_n+1), 1]$ is omitted, since its normalized indicator $h_{n,m_n+1}$ is a linear combination of $h_{n,1}, \ldots, h_{n,m_n}$.
	
	Unless one or more cells contain no sample points $X_i$, the constant vector $1_n$ is not an element of the column space of the design matrix in \eqref{eq:hn} and the OLSMC estimator is well-defined. A particular cell being empty with probability $\{1 - (m_n+1)^{-1}\}^n$, the probability that at least one cell is empty is bounded by $(m_n + 1) (1 - (m_n+1)^{-1})^n$, which converges to zero as soon as $m_n \ln(m_n) = \oh(n)$.
	
	The Gram matrix $P(h_n h_n') = (m_n+1) I_{m_n} - 1_{m_n} 1_{m_n}'$ has inverse $P(h_n h_n')^{-1} = (m_n+1)^{-1} (I_{m_n} + 1_{m_n} 1_{m_n}')$. The function $q_n = h_n' P(h_n h_n')^{-1} h_n = \{ h_n' h_n + (h_n' 1_{m_n})^2 \} / (m_n+1) = m_n$ is constant. Condition~\ref{cond:q} is satisfied as soon as $m_n = \oh(n^{-1/2})$. 
	
	Let $f_{m_n,j} = (m_n+1)^{-1} P( f \, \1\{ \, \cdot \in \mathcal{I}_{m_n,j} \} )$ be the average of the integrand $f$ on the cell $\mathcal{I}_{m_n,j}$, for $j = 1, \ldots, m_n+1$. The OLSMC estimator is equal to the arithmetic mean of the Monte Carlo estimates of these $m_n + 1$ local averages $f_{m_n,j}$. This is also the value obtained by post-stratification \citep[Example~8.4]{mcbook}. The number of operations required to calculate the OLSMC estimator is thus $\Oh(nm_n)$ only.
	
	The projection of $f$ on the space spanned by $\{1, h_{n,1}, \ldots, h_{n,m_n}\}$ is equal to the piecewise constant function $f^{(n)}$ with value $f_{m_n,j}$ on $\mathcal{I}_{m_n,j}$ for $j = 1, \ldots, m_n+1$. If $f$ is Lipschitz, then the error term $\eps_n = f - f^{(n)}$ will satisfy $\sup_{x \in S} \lvert \eps_n(x) \rvert = \Oh(m_n^{-1})$. In particular, $\sigma_n = \Oh(m_n^{-1})$. If, in addition, $\liminf_{n \to \infty} \sigma_n m_n > 0$, then $\eps_n / \sigma_n$ remains bounded uniformly, and the Lindeberg condition (Condition~\ref{cond:Lindeberg}) is satisfied too.
	
	The standard deviation of the OLSMC error at sample size $n$ is $\sigma_n n^{-1/2}$, achieved at $\Oh(nm_n)$ operations, while the one of the Monte Carlo integration error at sample size $nm_n$ is $n^{-1/2}m_n^{-1/2}$. For Lipschitz functions, we have $\sigma_n = \Oh(m_n^{-1}) = \oh(m_n^{-1/2})$, as in \eqref{eq:whenfaster2}. At comparable computational budgets, the OLSMC estimator thus achieves a faster rate of convergence than the Monte Carlo estimator.
	
	On the $d$-dimensional cube $S = [0, 1]^d$, we can employ a similar construction, starting from a partition of $S$ into $\Oh(m_n)$ cubes with side length $\Oh(m_n^{1/d})$. For Lipschitz functions, the error term $\eps_n$ will then have a standard deviation $\sigma_n$ of the order $\Oh(m_n^{-1/d})$. As soon as $d \ge 2$, Equation~\eqref{eq:whenfaster2} is no longer fulfilled. Given a comparable number of operations, the OLSMC estimator cannot achieve a convergence rate acceleration in comparison to ordinary Monte Carlo integration. \hfill $\bigtriangleup$
\end{example}

\begin{example}[Lindeberg condition]
	\label{ex:Liouville}
	We elaborate on Example~\ref{ex:indicator} to illustrate the Lindeberg condition. For ease of notation, put $k_n = m_n + 1$ and consider the integrand $f(x) = \1_{[u,1]}(x)$ for $x \in [0, 1]$, for some fixed $u \in [0, 1]$. 
	
	If $u$ is rational, then for infinitely many integer $n$ we can write $u = \ell_n / k_n$ for some $\ell_n \in \{0, \ldots, k_n\}$, and it follows that $f$ is a member of the linear span $\mathcal{F}_n$ of $\{1, h_{n,1}, \ldots, h_{n,m_n}\}$. In that case, $\sigma_n = 0$ for such $n$, and the normalized integration can obviously not converge to the standard normal distribution.
	
	Suppose that $u \in (0, 1)$ is irrational, and for every $n$, let $\ell_n \in \{0, \ldots, k_n-1\}$ be such that $a_n = \ell_n / k_n \le u < (\ell_n+1)/k_n = b_n$. The $L_2$-orthogonal projection of $f$ on $\mathcal{F}_n$ is given by the piecewise constant function
	\[
		f^{(n)}(x) =
		\begin{cases}
			0 & \text{if $0 \le x < a_n$,} \\
			v_n = k_n (b_n - u) & \text{if $x \in [a_n, b_n)$,} \\
			1 & \text{if $b_n \le x \le 1$.}
		\end{cases}
	\]
	The approximation error $\eps_n = f - f^{(n)}$ is
	\[
		\eps_n(x) =
		\begin{cases}
			0 & \text{if $x \in [0, 1] \setminus [a_n, b_n)$,} \\
			-v_n & \text{if $a_n \le x < u$,} \\
			1-v_n & \text{if $u \le x < b_n$,}
		\end{cases}
	\]
	with error variance $\sigma_n^2 = P(\eps_n^2) = k_n (b_n - u) (u - a_n)$. The squared, standardized approximation error is thus
	\[
		\eps_n^2(x) / \sigma_n^2
		=
		\begin{cases}
			0 & \text{if $x \in [0, 1] \setminus [a_n, b_n)$,} \\
			k_n \frac{b_n - u}{u - a_n} & \text{if $a_n \le x < u$,} \\
			k_n \frac{u - a_n}{b_n - u} & \text{if $u \le x < b_n$.}
		\end{cases}
	\]
	
	Now assume that there exists $c > 0$ such that for all pairs of integers $(p, q)$ with $q \ge 1$, we have
	\[
		\left\lvert u - \frac{p}{q} \right\rvert > \frac{c}{q^2}.
	\]
	Such a number $u$ is called a \emph{badly approximable number} \citep[p.~245]{bugeaud:2012}. It then follows that
	\[
		\sup_{x \in [0, 1]} \eps_n^2(x) / \sigma_n^2
		\le k_n \frac{k_n^{-1}}{c k_n^{-2}} = \frac{1}{c} k_n^2.
	\] 
	Since necessarily $k_n^2 = \oh(n)$ by the leverage condition, it follows that the indicator in the Lindeberg condition is zero for all sufficiently large $n$, and thus that the Lindeberg condition is fulfilled.
\end{example}


\begin{example}[Univariate polynomials]
	\label{ex:Legendre}
	Suppose that $h_{n,j} = h_j$ is equal to the Legendre polynomial $L_j$ of degree $j = 1, \ldots, m_n$. The Legendre polynomials are orthogonal on $S = [-1, 1]$ with respect to the uniform distribution $P$. The Gram matrix $P(h_n h_n')$ is diagonal with entries $1 / (2j+1)$ on the diagonal. Furthermore, the Legendre polynomials satisfy $|L_j(x)| \le 1$ for $x \in [-1, 1]$ while $L_j(1) = 1$. Hence $q_n(x) = \sum_{j=1}^{m_n} (2j+1) L_j(x)^2$, with supremum $q_n(1) = \sum_{j=1}^{m_n} (2j+1) = m_n(m_n+2)$. Equation~\eqref{eq:q:suff} is satisfied when $m_n = \oh(n^{1/3})$.
	
	If $f$ is $k+1$ times continuously differentiable for some integer $k \ge 1$, then the bounds on the Legendre coefficients in Theorem~2.1 in \cite{wang+x:2012} imply that $\sigma_n^2 = \Oh(m_n^{-2k-1})$. The convergence rate of the OLSMC estimator is thus $\Oh(m_n^{-k-1/2} n^{-1/2})$. The smoother $f$, the faster the rate. Condition~\eqref{eq:whenfaster} is fulfilled as soon as $f$ is twice continuously differentiable ($k \ge 1$). For such functions $f$, increasing the number of polynomial control variates reduces the integration error at a faster rate than increasing the number of Monte Carlo points can achieve. \hfill $\bigtriangleup$
\end{example}

For the Fourier basis on $S = [0, 1]$, it is shown in \citep{portier+s:2018v3} that essentially the same conclusions hold as for the polynomial basis in Example~\ref{ex:Legendre}.

\begin{example}[Multivariate polynomials]
	\label{ex:Legendre:multi}
	As in \cite{andrews:1991} and \cite{newey:1997}, suppose that $S = [-1, 1]^d$ (or more generally a Cartesian product of compact intervals) and that $P$ is the uniform distribution on $S$. As control variates $h_{n,j} = h_{j} : S \to \reals$, consider tensor products $h_j(x) = \prod_{\ell = 1}^d \bar{L}_{a_j(\ell)}(x_\ell)$ for $x = (x_1, \ldots, x_d) \in S$, where $\bar{L}_{a}$ is the normalised Legendre polynomial of degree $a \in \mathbb{N} = \{0, 1, 2, \ldots\}$. The sequence of degree vectors $a_j = (a_j(1), \ldots, a_j(d)) \in \mathbb{N}^d \setminus \{(0, \ldots, 0)\}$ is such that no polynomial of degree $a+1$ appears in one of the coordinates as long as not all polynomials of degree up to $a$ have appeared in all other coordinates.
	
	As shown in \cite[Example~II]{andrews:1991} and the proof of Theorem~4 in \cite{newey:1997}, we then have $\sup_{x \in S} \lvert h_j(x) \rvert = \Oh(j^{1/2})$ as $j \to \infty$ and the smallest eigenvalue of the $m \times m$ Gram matrix of $(h_1, \ldots, h_{m})$ is bounded away from zero, uniformly for all $m$. By Remark~\ref{rem:leverage}, it then follows that $\sup_{x \in S} q_n(x) = \Oh(\sum_{j=1}^{m_n} j) = \Oh(m_n^2)$. As a consequence, the leverage condition is satisfied as soon as $m_n^2 = \oh(n/m_n)$, i.e., $m_n^3 = \oh(n)$ as $n \to \infty$.
	
	Further, assume that the integrand $f = f_n$ is $k$ times continuously differentiable on $S$, for some integer $k \ge 1$. In the proof of Theorem~4 in \cite{newey:1997}, Theorem~8 in \cite{lorentz:1986} is cited according to which we have $\sup_{x \in S} \lvert \eps_n(x) \rvert = \Oh(m_n^{-k/d})$. But then also $\sigma_n = \Oh(m_n^{-k/d})$. The convergence rate of the OLSMC estimator is then $\Oh(m_n^{-k/d} n^{-1/2})$. In view of Equation~\eqref{eq:whenfaster}, it is more efficient to increase the number of control variates than the Monte Carlo sample size as soon as $k > d$, i.e., the integrand $f$ is sufficiently smooth.
\end{example}

\section{Concluding remarks}
\label{sec:discussion}

The paper provides a new asymptotic theory for Monte Carlo integration with control variates. Our main result is that the $n^{-1/2}$ convergence rate of the basic Monte Carlo method can be improved when using a growing number, $m$, of control variates. The obtained convergence rate, $n^{-1/2}\sigma_m$, is then impacted by the value of $\sigma_m$, which reflects the approximation quality of the integrand in the space of control variates. The considered examples have shown that the practical benefits might be important depending, obviously, on $\sigma_m$ and also on the computation time needed to invert the Gram matrix of the control variates. Attractive avenues for further research are now discussed. 

\paragraph{Combination with other integration methods.}
Theorem~\ref{thm:rate} echoes other studies (based on different techniques than control variates) that establish acceleration of the standard Monte Carlo rate $n^{-1/2}$. This includes Quasi-Monte Carlo integration \citep{dick+p:2010}, Gaussian quadrature \citep{brass+p:2011}, which has been studied recently in a (repulsive) Monte Carlo sampling context \citep{bardenet+h:2016}, parametric \citep{delyon+:p:2018} and nonparametric \citep{zhang:1996} adaptive importance sampling, and kernel smoothing methods \citep{delyon+p:2016}. Combining control variates with some of the previous methods, as has been done with Quasi-Monte Carlo in \citep{oates+g:2016} and with parametric importance sampling in \cite{owen+z:2000}, might allow to design even more efficient algorithms.

\paragraph{Theoretical perspectives.}
Non-asymptotic bounds would offer a different type of guarantee than the one provided in the paper: for a pre-specified probability level, one would have an error bound depending on $n$ and $\sigma_m$. In addition, the present work only considers the integration error for a single integrand whereas uniform bounds over some classes of integrands would be appropriate. Such results would apply to situations where many integrals are to be computed as for instance in likelihood-based inference for parametric models with latent variables. 

\paragraph{Regularization.}
As illustrated by the \textit{leverage condition}, the number of control variates at use needs to be limited but, in the mean time, the bound obtained, $n^{-1/2}\sigma_m$, is decreasing in the number of control variates. This advocates for selecting the most informative control variates before using them in the Monte Carlo procedure. Such an approach, based on the Lasso, has already been proposed in \cite{south+o+m+d:2018} and most recently, a pre-selection of the control variates, still by the Lasso, has been studied in \cite{leluc+p+s:2018}. The theoretical bounds obtained and the numerical illustration therein clearly advocate for pre-selecting the most effective control variates.

\paragraph{Un-normalized densities.}
Applications to Bayesian inference on models defined by un-normalized densities are not included in the present study. Two strategies might be conducted to handle such a situation. The first one consists in a normalized importance sampling approach. Suppose $h = (h_1, \ldots, h_m)'$ is a vector of control variates with respect to Lebesgue measure $\lambda$. Let $p$ denote the un-normalized target density and $q$ the importance sampling density. Let $(X_1,\ldots, X_n)$ be an independent random sample from $q$. Let $\hat \mu_n^{\mathrm{wOLS}}(f)$ denote the weighted OLS estimate defined as in \eqref{eq:OLS} but replacing $f$ by $fp/q$ and $h$ by $h/q$. Note that $\hat \mu_n^{\mathrm{wOLS}}(f)$ is an unbiased estimate of $\int fp \, \diff \lambda$. Because $p$ is un-normalized, the estimate cannot be computed and instead one needs to rely on the normalized version $\hat \mu_n^{\mathrm{wOLS}}(f) / \hat \mu_n^{\mathrm{wOLS}}(1)$. The second strategy follows from \cite{oates+c+b+g:2017} and relies on a Markov chain Monte Carlo approach. The control variates are defined through the Stein identity, see Eq.~(1) in the aforementioned paper. The sequence of integration points $(X_1, \ldots, X_n)$ is generated using the Metropolis--Hastings algorithm with target $p$. These two modifications allow to work with un-normalized densities. Non-trivial modifications of our proofs would be needed to analyse such procedures.

\section{Proofs}
\label{sec:proofs}

\begin{proof}[Proof of \eqref{eq:sigma:handy}]
	Put $\eps^{(n)} = (\eps(X_1), \ldots, \eps_n(X_n))'$. We have $f^{(n)} = \mu 1_n + \beta_{\mathrm{opt}}' H^{(n)} + \eps^{(n)}$. 
	Since $I_n - \Pi_{n,m}$ is the projection matrix on the orthocomplement in $\reals^n$ of the column space of $H^{(n)}$, we have by \eqref{eq:sigma:proj} that
	\begin{align*}
	\hat{\sigma}_{n,\mathrm{OLS}}^2
	&=
	\frac{1}{n}
	(\mu 1_n + \eps^{(n)} - \hat{\mu}_n^{\mathrm{OLS}} 1_n)'
	(I_n - \Pi_{n,m}) 
	(\mu 1_n + \eps^{(n)} - \hat{\mu}_n^{\mathrm{OLS}} 1_n) \\
	&=
	\frac{1}{n} (\eps^{(n)})' (I_n - \Pi_{n,m}) \, \eps^{(n)}  
	- \frac{1}{n} (\hat{\mu}_n^{\mathrm{OLS}} - \mu) 1_n' (I_n - \Pi_{n,m}) \{ 2 \eps^{(n)} - (\hat{\mu}_n^{\mathrm{OLS}} - \mu) 1_n \}.
	\end{align*}
	Replace $\Pi_{n,m}$ by the right-hand side in \eqref{eq:Pinm} to find
	\begin{multline*}
	\hat{\sigma}_{n,\mathrm{OLS}}^2
	=
	P_n(\eps^2) - P_n( \eps h' ) \, P_n(h h')^{-1} \, P_n(h \eps) \\
	- 
	(\hat{\mu}_n^{\mathrm{OLS}} - \mu)
	\{2 P_n(\eps) - 2 P_n(h') P_n(h h')^{-1} P_n(h \eps)\} \\
	+ (\hat{\mu}_n^{\mathrm{OLS}} - \mu)^2 \{1 - P_n(h') P_n(hh')^{-1} P_n(h)\}.
	\end{multline*}
	Equation~\eqref{eq:mu_OLS:2} and the identity $f = \mu + \beta_{\mathrm{opt}}' h + \eps$ imply that
	\[
	\hat{\mu}_n^{\mathrm{OLS}} - \mu 
	=
	\frac{P_n(\eps) - P_n(h') \, P_n(hh')^{-1} P_n(h\eps)}{1 - P_n(h') P_n(hh')^{-1} P_n(h)}.
	\]
	Use this identity to simplify the expression for $\hat{\sigma}_{n,\mathrm{OLS}}^2$ and arrive at \eqref{eq:sigma:handy}.
\end{proof}

The Euclidean norm of a vector $v$ is denoted by $|v| = (v'v)^{1/2}$. The corresponding matrix norm is $|A|_2 = \sup \{ |Av|/|v| : v \ne 0 \}$. The Frobenius norm of a rectangular matrix $A$ is given by $\lvert A \rvert_F = (\sum_i\sum_j A_{ij}^2)^{1/2} = \{\trace( A'A )\}^{1/2}$, with $\trace$ the trace operator. We have $|A|_2 \le |A|_F$, since $|A|_2^2$ is equal to the largest eigenvalue of $A'A$, while $|A|_F^2$ is equal to the sum of all eigenvalues of $A'A$, all of which are nonnegative. Recall the cyclic property of the trace operator: for matrices $A$ and $B$ of dimensions $k \times \ell$ and $\ell \times k$, respectively, we have $\trace(AB) = \trace(BA)$.

Recall that the Gram matrix $P(h_n h_n')$ was assumed to be invertible. Let $I_k$ denote the $k \times k$ identity matrix. Let $B_n$ be an $m_n \times m_n$ matrix such that $B_n' B_n = P(h_n h_n')^{-1}$; use for instance the eigendecomposition of $P(h_n h_n')$ to construct $B_n$. Clearly, $B_n$ is invertible. The OLS estimator based on the transformed vector of control functions
\[ 
\hbar_n = (\hbar_{n,1}, \ldots, \hbar_{n,m_n})' = B_n h_n 
\] 
is therefore identical to the one based on $h_n$. The transformed vector $\hbar_n$ has the advantage that its elements are orthonormal, i.e., its Gram matrix is equal to the identity matrix:
\begin{equation}
\label{eq:Id}
P(\hbar_n \hbar_n') = B_n \, P(h_n h_n') \, B_n' = B_n (B_n' B_n)^{-1} B_n' = I_{m_n}.
\end{equation}
The function $q_n$ defined in \eqref{eq:q} is equal to $q_n = \hbar_n' \hbar_n$.

\begin{lemma}
	We have
	\begin{align}
	\label{eq:Pnhn}
	\expec\{ |P_n(h_n)|^2 \}
	&=
	n^{-1} P(h_n' h_n), \\
	\label{eq:Pnhn:ortho}
	\expec\{ |P_n(\hbar_n)|^2 \}
	&=
	m_n / n.
	\end{align}
\end{lemma}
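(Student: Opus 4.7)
The plan is to prove \eqref{eq:Pnhn} by a direct computation exploiting the definition of the Euclidean norm and the independence of the sample, and then to obtain \eqref{eq:Pnhn:ortho} as an immediate consequence using the orthonormalisation.

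First I would write $|P_n(h_n)|^2 = \sum_{j=1}^{m_n} P_n(h_{n,j})^2$. Since the $X_i$ are i.i.d.\ from $P$ and $P(h_{n,j}) = 0$, each coordinate $P_n(h_{n,j}) = n^{-1} \sum_{i=1}^{n} h_{n,j}(X_i)$ has mean zero and variance $n^{-1} P(h_{n,j}^2)$, so $\expec\{P_n(h_{n,j})^2\} = n^{-1} P(h_{n,j}^2)$. Summing over $j$ and interchanging sum and integral gives
\[
\expec\{ |P_n(h_n)|^2 \}
= n^{-1} \sum_{j=1}^{m_n} P(h_{n,j}^2)
= n^{-1} P\Bigl( \textstyle\sum_{j=1}^{m_n} h_{n,j}^2 \Bigr)
= n^{-1} P(h_n' h_n),
\]
which is \eqref{eq:Pnhn}.

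For \eqref{eq:Pnhn:ortho}, I would apply \eqref{eq:Pnhn} to the orthonormalised vector $\hbar_n = B_n h_n$, which satisfies $P(\hbar_{n,j}) = 0$ by linearity. This gives $\expec\{|P_n(\hbar_n)|^2\} = n^{-1} P(\hbar_n' \hbar_n)$. Finally, observe that $\hbar_n' \hbar_n = h_n' B_n' B_n h_n = h_n' P(h_n h_n')^{-1} h_n = q_n$, so by \eqref{eq:Pq} we have $P(\hbar_n' \hbar_n) = P(q_n) = m_n$, yielding $\expec\{|P_n(\hbar_n)|^2\} = m_n/n$.

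There is no genuine obstacle: the result is a one-line variance computation dressed up in matrix notation, and the only mildly substantive ingredient is the identification $\hbar_n' \hbar_n = q_n$, which follows by construction of $B_n$ via $B_n' B_n = P(h_n h_n')^{-1}$ together with the definition \eqref{eq:q} of the leverage function.
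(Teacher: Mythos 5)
Your proof is correct and matches the paper's argument in substance: both rest on the variance computation for an i.i.d.\ centered sample (you organize the double sum by coordinates $j$, the paper by sample indices $i$, which is the same calculation), and both derive \eqref{eq:Pnhn:ortho} by applying \eqref{eq:Pnhn} to $\hbar_n = B_n h_n$ and invoking $P(\hbar_n'\hbar_n) = P(q_n) = m_n$ from \eqref{eq:Pq}.
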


\begin{proof}
	We have
	\[
	|P_n(h_n)|^2
	=
	\frac{1}{n^2} \sum_{i=1}^n \sum_{j=1}^n h_n'(X_i) \, h_n(X_j).
	\]
	The random variables $X_1, \ldots, X_n$ form an independent random sample from $P$. Furthermore, $P(h_n) = 0$. As a consequence, 
	\[
	\expec\{ |P_n(h_n)|^2 \} = n^{-1} \expec\{ h_n'(X_1) \, h_n(X_1) \} = n^{-1} P(h_n' h_n),
	\]
	yielding \eqref{eq:Pnhn}. Equation~\eqref{eq:Pnhn:ortho} follows from \eqref{eq:Pnhn} and $P(\hbar_n' \hbar_n) = P(q_n) = m_n$, see \eqref{eq:Pq}.
\end{proof}

\begin{lemma}
	\begin{equation}
	\label{eq:Gram:Frobenius}
	\expec\{ |P_n(\hbar_n \hbar_n') - I_{m_n}|_F^2 \}
	=  n^{-1} \{ P_n(q_n^2) - m_n \}.
	\end{equation}
\end{lemma}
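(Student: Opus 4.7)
The plan is to expand the squared Frobenius norm via the trace, use the fact that $P_n(\hbar_n\hbar_n') - I_{m_n}$ is a symmetric matrix whose entries are i.i.d.\ averages of mean-zero summands, and then reduce the calculation to moments of the leverage function $q_n$ via the identity $\hbar_n(x)'\hbar_n(x) = q_n(x)$. (I read the right-hand side as $n^{-1}\{P(q_n^2) - m_n\}$, since the left-hand side is a deterministic quantity.)

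First, set $A_n = P_n(\hbar_n\hbar_n') - I_{m_n}$. Since $A_n$ is symmetric, $|A_n|_F^2 = \trace(A_n^2)$. Writing $A_n = n^{-1}\sum_{i=1}^n Y_i$ with $Y_i = \hbar_n(X_i)\hbar_n(X_i)' - I_{m_n}$, the orthonormality relation \eqref{eq:Id} yields $\expec[Y_i] = 0$. Because the $X_i$ are i.i.d., the cross terms in $\expec[\trace(A_n^2)] = n^{-2}\sum_{i,j}\expec[\trace(Y_iY_j)]$ vanish by independence and centering (using the cyclic property of the trace to pass the expectation through), leaving
\[
\expec\{|A_n|_F^2\} = n^{-1}\,\expec[\trace(Y_1^2)].
\]

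Next, I expand $Y_1^2 = \hbar_n(X_1)\hbar_n(X_1)'\hbar_n(X_1)\hbar_n(X_1)' - 2\hbar_n(X_1)\hbar_n(X_1)' + I_{m_n}$ and use the scalar identity $\hbar_n(X_1)'\hbar_n(X_1) = q_n(X_1)$ to collapse the quartic term to $q_n(X_1)\,\hbar_n(X_1)\hbar_n(X_1)'$. Taking the trace and invoking the cyclic property once more gives $\trace(Y_1^2) = q_n(X_1)^2 - 2q_n(X_1) + m_n$. Applying $P(q_n) = m_n$ from \eqref{eq:Pq} then produces $\expec[\trace(Y_1^2)] = P(q_n^2) - m_n$, which yields \eqref{eq:Gram:Frobenius}.

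The only step requiring any care is the decoupling of the double sum: one needs to verify that $\expec[\trace(Y_iY_j)] = 0$ for $i \ne j$, which follows by conditioning on one factor and using that $\expec[Y_i] = 0$, together with linearity of the trace. Everything else is bookkeeping with the scalar $q_n(X_1)$ and the definition of $\hbar_n$.
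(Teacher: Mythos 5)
Your proof is correct and follows essentially the same route as the paper's: decompose $P_n(\hbar_n\hbar_n') - I_{m_n}$ into a sum of centered i.i.d.\ matrices, kill the cross terms, and reduce $\trace(Y_1^2)$ to $q_n^2 - 2q_n + m_n$ via the cyclic property and $\hbar_n'\hbar_n = q_n$. You are also right that the displayed right-hand side should read $P(q_n^2)$ rather than $P_n(q_n^2)$ (the left-hand side is deterministic, and the paper itself uses the lemma in that form), so your reading of the statement is the intended one.
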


\begin{proof}
	We have $P_n(\hbar_n \hbar_n') - I_{m_n} = n^{-1} \sum_{i=1}^n A_{n,i}$ with $A_{n,i} = \hbar_n(X_i) \hbar_n'(X_i) - I_{m_n}$. Since the matrix $\hbar_n \hbar_n'$ is symmetric and since the trace operator is linear,
	\begin{align*}
	\expec\{ |P_n(\hbar_n \hbar_n') - I_{m_n}|_F^2 \}
	&=
	\expec(\trace[\{P_n(\hbar_n \hbar_n') - I_{m_n}\}^2]) \\
	&=
	\trace(\expec[\{P_n(\hbar_n \hbar_n') - I_{m_n}\}^2]) \\
	&=
	\frac{1}{n^2} \sum_{i=1}^n \sum_{j=1}^n
	\trace\{\expec(A_{n,i} A_{n,j})\}.
	\end{align*}
	The triangular array of random matrices $(A_{n,i})_{n,i}$ is rowwise iid; the random matrices $A_{n,i}$ are square integrable and centered. If $i \ne j$, then $\expec[A_{n,i} A_{n,j}] = 0$, the $m_n \times m_n$ null matrix. Hence
	\[
	\expec\{ |P_n(\hbar_n \hbar_n') - I_{m_n}|_F^2 \} 
	= n^{-1} \trace\{ \expec( A_{n,1}^2 ) \}.
	\]
	By the cyclic property of the trace,
	\[
	\trace\{ \expec( A_{n,1}^2 ) \}
	=
	\trace[ P\{(\hbar_n \hbar_n')^2\} - I_{m_n} ] \\
	=
	P\{ (\hbar_n' \hbar_n)^2 \} - m_n.
	\]
	Since $\hbar_n' \hbar_n = q_n$, the equality \eqref{eq:Gram:Frobenius} follows.
\end{proof}

\begin{lemma}
	\begin{equation}
	\label{eq:Gram:invertible}
	\pr\{ \text{$P_n(h_n h_n')$ is not invertible} \}
	\le
	n^{-1} P (q_n^2)
	\end{equation}
\end{lemma}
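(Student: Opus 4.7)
The plan is to reduce the invertibility question to a concentration bound on the orthonormalized Gram matrix, and then invoke the Frobenius-norm estimate already established in \eqref{eq:Gram:Frobenius}.

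First, I would observe that invertibility of $P_n(h_n h_n')$ is equivalent to invertibility of $P_n(\hbar_n \hbar_n')$. Indeed, $P_n(\hbar_n \hbar_n') = B_n \, P_n(h_n h_n') \, B_n'$, and $B_n$ is invertible by construction. So it suffices to bound the probability that $P_n(\hbar_n \hbar_n')$ is singular.

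Next, I would use a perturbation-of-the-identity argument. Since $P_n(\hbar_n \hbar_n')$ is a symmetric real matrix, if $|P_n(\hbar_n \hbar_n') - I_{m_n}|_2 < 1$, then every eigenvalue of $P_n(\hbar_n \hbar_n')$ lies within distance less than $1$ from an eigenvalue of $I_{m_n}$, namely $1$, and is therefore strictly positive; hence $P_n(\hbar_n \hbar_n')$ is invertible. Equivalently, non-invertibility forces $|P_n(\hbar_n \hbar_n') - I_{m_n}|_2 \ge 1$.

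Finally, I would combine the norm inequality $|A|_2 \le |A|_F$ recalled just before the lemma with Markov's inequality and the identity \eqref{eq:Gram:Frobenius}:
\begin{align*}
\pr\{ P_n(h_n h_n') \text{ not invertible} \}
&\le \pr\{ |P_n(\hbar_n \hbar_n') - I_{m_n}|_2 \ge 1 \} \\
&\le \pr\{ |P_n(\hbar_n \hbar_n') - I_{m_n}|_F^2 \ge 1 \} \\
&\le \expec\{ |P_n(\hbar_n \hbar_n') - I_{m_n}|_F^2 \}
= n^{-1} \{ P(q_n^2) - m_n \}
\le n^{-1} P(q_n^2),
\end{align*}
which is the desired bound. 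No step looks genuinely difficult; the only thing to be careful about is the reduction to $\hbar_n$, to be sure that the arbitrariness of the square-root factor $B_n$ does not cause trouble, and the elementary spectral perturbation remark linking $|M - I|_2 < 1$ to invertibility of $M$.
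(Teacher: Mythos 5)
Your proposal is correct and follows essentially the same route as the paper: reduce to the orthonormalized Gram matrix $P_n(\hbar_n\hbar_n')$, argue that singularity forces $\lvert P_n(\hbar_n\hbar_n') - I_{m_n}\rvert_2 \ge 1$ (the paper does this via the null eigenvector $v$ with $\{P_n(\hbar_n\hbar_n') - I_{m_n}\}v = -v$, which is equivalent to your spectral perturbation remark), and then apply $\lvert\cdot\rvert_2 \le \lvert\cdot\rvert_F$, Markov's inequality, and \eqref{eq:Gram:Frobenius}. No gaps.
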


\begin{proof}
	Since $\hbar = B_n h_n$ and since $B_n$ is invertible, the matrix $P_n(h_n h_n')$ is invertible if and only if the matrix $P_n(\hbar_n \hbar_n')$ is so. Suppose $P_n(\hbar_n \hbar_n')$ is not invertible. Then there exists a nonzero vector $v \in \mathbb{R}^{m_n}$ such that $P_n(\hbar_n \hbar_n') v = 0$ and thus $\{ P_n(\hbar_n \hbar_n') - I_{m_n} \} v = -v$. It then follows that
	\[
	|P_n(\hbar_n \hbar_n') - I_{m_n}|_F \ge |P_n(\hbar_n \hbar_n') - I_{m_n}|_2 \ge 1.
	\]
	But since $I_{m_n} = P(\hbar_n \hbar_n')$ by \eqref{eq:Id}, equation~\eqref{eq:Gram:Frobenius} yields
	\begin{align*}
	\pr\{ \text{$P_n(h_n h_n')$ is not invertible} \}
	&\le 
	\pr\{ |P_n(\hbar_n \hbar_n') - I_{m_n}|_F \ge 1 \} \\
	&\le
	\expec\{ |P_n(\hbar_n \hbar_n') - I_{m_n}|_F^2 \}
	\le
	n^{-1} P(|\hbar_n|^4).
	\end{align*}
	Finally, $|\hbar_n|^4 = (\hbar_n' \hbar_n)^2 = q_n^2$.
\end{proof}

\begin{lemma}
	\label{lem:cond1:consequences}
	If Condition~\ref{cond:q} holds, then $P_n(h_n h_n')$ and $P_n(\hbar_n \hbar_n')$ are invertible with probability tending to one as $n \to \infty$ and
	\begin{align} 
	\label{eq:Graminverse:ortho}
	\lvert P_n(\hbar_n \hbar_n')^{-1} \rvert_2 
	&\le 1 + \oh_{\pr}(1), \\
	\label{eq:denominatorbound}
	P_n(h_n') \, P_n(h_n h_n')^{-1} \, P_n(h_n)
	&=
	\Oh_{\pr}(m_n / n).
	\end{align}
\end{lemma}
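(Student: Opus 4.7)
The plan is to exploit the already-developed machinery about the orthonormalised vector $\hbar_n$, since all three statements are invariant under (or reduce to statements under) the transformation $h_n \mapsto B_n h_n$, and to combine the Frobenius bound \eqref{eq:Gram:Frobenius} with Markov's inequality, Weyl-type perturbation, and the moment bound \eqref{eq:Pnhn:ortho}.

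First I would handle invertibility. Condition~\ref{cond:q} implies \eqref{eq:on}, i.e.\ $P(q_n^2) = \oh(n)$; plugging this into \eqref{eq:Gram:invertible} shows that $P_n(h_n h_n')$ is invertible with probability tending to one, and since $\hbar_n = B_n h_n$ with $B_n$ invertible, the same holds for $P_n(\hbar_n \hbar_n')$.

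Next, for the operator-norm bound \eqref{eq:Graminverse:ortho}, I set $E_n = P_n(\hbar_n \hbar_n') - I_{m_n}$. By \eqref{eq:Id}, this is a centred average, and by \eqref{eq:Gram:Frobenius} together with Markov's inequality,
\[
\lvert E_n \rvert_2 \;\le\; \lvert E_n \rvert_F \;=\; \Oh_{\pr}\bigl(\{n^{-1} P(q_n^2)\}^{1/2}\bigr) \;=\; \oh_{\pr}(1).
\]
On the event $\{\lvert E_n \rvert_2 < 1\}$, which has probability tending to one, a standard Neumann-series (or Weyl eigenvalue) argument yields $\lvert P_n(\hbar_n \hbar_n')^{-1} \rvert_2 \le (1 - \lvert E_n \rvert_2)^{-1} = 1 + \oh_{\pr}(1)$, which gives \eqref{eq:Graminverse:ortho}.

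Finally, for \eqref{eq:denominatorbound}, I would first observe the invariance: since $h_n = B_n^{-1} \hbar_n$, a direct computation gives
\[
P_n(h_n')\, P_n(h_n h_n')^{-1}\, P_n(h_n) \;=\; P_n(\hbar_n')\, P_n(\hbar_n \hbar_n')^{-1}\, P_n(\hbar_n),
\]
so matters reduce to the orthonormalised control functions. By the elementary inequality $v' A^{-1} v \le \lvert A^{-1} \rvert_2 \, \lvert v \rvert^2$, combined with \eqref{eq:Graminverse:ortho} and with \eqref{eq:Pnhn:ortho} upgraded to a stochastic bound via Markov ($\lvert P_n(\hbar_n) \rvert^2 = \Oh_{\pr}(m_n/n)$), the product is $\Oh_{\pr}(m_n/n)$, which is \eqref{eq:denominatorbound}.

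There is no real obstacle here; the only point that needs a moment's care is making sure the probability-one-minus-$o(1)$ event on which $P_n(\hbar_n\hbar_n')$ is invertible is the same throughout, so that the bound on $\lvert P_n(\hbar_n\hbar_n')^{-1}\rvert_2$ can be fed into the quadratic-form inequality used for \eqref{eq:denominatorbound}; this is handled by restricting all estimates to that event and absorbing the complement into the $\oh_{\pr}$ and $\Oh_{\pr}$ notation.
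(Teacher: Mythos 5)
Your proposal is correct and follows essentially the same route as the paper: invertibility via \eqref{eq:Gram:invertible} and \eqref{eq:on}, the operator-norm bound via the Frobenius estimate \eqref{eq:Gram:Frobenius} plus Markov and a Neumann-series inversion, and \eqref{eq:denominatorbound} via the invariance under $h_n \mapsto B_n h_n$, the quadratic-form inequality, and \eqref{eq:Pnhn:ortho}. No gaps.
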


\begin{proof}
	In view of~\eqref{eq:Gram:invertible}, the first part of Condition~\ref{cond:q} implies that $P_n(h_n h_n')$ and thus $P_n(\hbar_n \hbar_n')$ are invertible with probability tending to one. 
	
	Write $J_n = P_n(\hbar_n \hbar_n')$. On the event that $P_n(h_n h_n')$ is invertible, $J_n$ is invertible too, and $J_n^{-1} = I_{m_n} + J_n^{-1} (I_{m_n} - J_n)$ and thus $|J_n^{-1}|_2 \le 1 + |J_n^{-1}|_2 \, |I_{m_n} - J_n|_2$ by multiplicativity of the matrix norm $| \, \cdot \, |_2$. It follows that, provided $|I_{m_n} - J_n|_2 < 1$, we have
	\[
	|J_n^{-1}|_2 \le \frac{1}{1 - |I_{m_n} - J_n|_2}.
	\]
	Recall that $B_n' B_n = P(h_n h_n')^{-1}$. By an application of \eqref{eq:Gram:Frobenius} to the orthonormalized functions $\hbar_n = B_n h_n$, we have
	\[
	\expec( | J_n - I_{m_n} |_F^2 )
	\le
	n^{-1} P(|\hbar_n|^4)
	=
	n^{-1} P(q_n^2)
	=
	\oh(1)
	\]
	as $n \to \infty$, in view of \eqref{eq:on}. Therefore, $|I_{m_n} - J_n|_2 \le |I_{m_n} - J_n|_F = \oh_{\pr}(1)$. We conclude that $|J_n^{-1}|_2 \le 1 + \oh_{\pr}(1)$.
	
	Secondly, since
	\[
	P_n(h_n') \, P_n(h_n h_n')^{-1} \, P_n(h_n)
	=
	P_n(\hbar_n') \, P_n(\hbar_n \hbar_n')^{-1} \, P_n(\hbar_n),
	\]
	we have
	\[
	\lvert P_n(h_n') \, P_n(h_n h_n')^{-1} \, P_n(h_n) \rvert
	\le
	\lvert P_n(\hbar_n) \rvert^2 \, |J_n^{-1}|_2.
	\]
	We have just shown that $|J_n^{-1}|_2 = \Oh_{\pr}(1)$. Furthermore, $\lvert P_n(\hbar_n) \rvert^2 = \Oh_{\pr}(m_n/n)$ by \eqref{eq:Pnhn:ortho} and Markov's inequality.
\end{proof}

Recall that $f_n = g_n + \eps_n$, where $g_n$ is the orthogonal projection of $f_n$ on the linear subspace of $L^2(P)$ spanned by $\{1, h_{n,1}, \ldots, h_{n,m_n}\}$.

\begin{lemma}
	We have
	\begin{equation}
	\label{eq:Pheps}
	\expec \{ |P_n( h_n \eps_n )|^2 \}
	=
	n^{-1} P (|h_n|^2 \eps_n^2).
	\end{equation}
	If Condition~\ref{cond:q} holds, we have therefore
	\begin{equation}
	\label{eq:Pheps:ortho}
	\lvert P_n( \hbar_n \eps_n ) \rvert
	=
	\oh_{\pr} ( m_n^{-1/2} \sigma_n ), \qquad n \to \infty.
	\end{equation}
\end{lemma}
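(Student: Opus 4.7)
The plan is to handle the two equations in turn, with the second being a direct consequence of the first combined with the leverage condition.

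For \eqref{eq:Pheps}, I would expand the squared Euclidean norm as
\[
\lvert P_n(h_n \eps_n) \rvert^2
=
\frac{1}{n^2} \sum_{i=1}^n \sum_{k=1}^n \eps_n(X_i) \, \eps_n(X_k) \, h_n(X_i)' h_n(X_k).
\]
Since $(X_1, \ldots, X_n)$ is iid under $P$ and the defining property of $\eps_n$ gives $P(h_n \eps_n) = 0$, every cross term with $i \ne k$ has zero expectation after factoring. Only the $n$ diagonal terms survive, each contributing $P(h_n' h_n \, \eps_n^2) = P(\lvert h_n \rvert^2 \eps_n^2)$, which yields \eqref{eq:Pheps} after dividing by $n^2$.

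For \eqref{eq:Pheps:ortho}, I would apply \eqref{eq:Pheps} to the orthonormalized vector $\hbar_n = B_n h_n$ in place of $h_n$; this is legitimate since $\hbar_n$ has zero mean and lies in $L^2(P)$, and since $P(\hbar_n \eps_n) = B_n P(h_n \eps_n) = 0$. This gives
\[
\expec\{ \lvert P_n(\hbar_n \eps_n) \rvert^2 \}
=
n^{-1} P(\lvert \hbar_n \rvert^2 \eps_n^2)
=
n^{-1} P(q_n \, \eps_n^2),
\]
using the identity $\lvert \hbar_n \rvert^2 = \hbar_n' \hbar_n = q_n$ noted immediately after \eqref{eq:Id}. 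Under Condition~\ref{cond:q}, the bound $\sup_{x \in S} q_n(x) = \oh(n/m_n)$ gives $P(q_n \eps_n^2) \le \sup_{x \in S} q_n(x) \cdot \sigma_n^2 = \oh\{ (n/m_n) \sigma_n^2 \}$, so the second moment of $\lvert P_n(\hbar_n \eps_n) \rvert$ is $\oh(m_n^{-1} \sigma_n^2)$. A standard application of Markov's inequality then delivers \eqref{eq:Pheps:ortho}.

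There is no real obstacle: the argument is a mean-variance computation exploiting independence and the $L^2$-orthogonality of the error function to the control functions, together with the worst-case leverage bound. The only point worth flagging is to be careful that $\lvert \cdot \rvert$ in \eqref{eq:Pheps} denotes the Euclidean norm on $\reals^{m_n}$ so that the expansion above produces the inner product $h_n(X_i)' h_n(X_k)$, which in the diagonal case gives $\lvert h_n(X_i) \rvert^2$.
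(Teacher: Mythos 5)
Your proposal is correct and follows essentially the same route as the paper: the expectation identity via the iid expansion with cross terms killed by $P(h_n\eps_n)=0$, then specialization to $\hbar_n$ using $\lvert\hbar_n\rvert^2=q_n$, the bound $P(q_n\eps_n^2)\le\sup_x q_n(x)\,\sigma_n^2=\oh\{(n/m_n)\sigma_n^2\}$, and Markov's inequality. The only cosmetic difference is that you derive this last bound directly from Condition~\ref{cond:q}, whereas the paper cites it as the intermediate condition~\eqref{eq:onm} established in its remark on weakening the leverage condition.
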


\begin{proof}
	We have
	\[
	|P_n( h_n \eps_n )|^2
	= \frac{1}{n^2} \sum_{i=1}^n \sum_{j=1}^n
	h_n'(X_i) \, h_n(X_j) \, \eps_n(X_i) \, \eps_n(X_j).
	\]
	Since $P( h_{n,k} \eps_n ) = 0$ for all $k = 1, \ldots, m_n$ and since the variables $X_1, \ldots, X_n$ are iid $P$, we have $\expec\{ |P_n( h_n \eps_n )|^2 \} = n^{-1} \expec\{ h_n'(X_1) h_n(X_1) \, \eps_n(X_1)^2 \}$, yielding \eqref{eq:Pheps}.
	
	Apply \eqref{eq:Pheps} to $\hbar_n$; since $|\hbar_n|^2 = \hbar_n' \hbar_n = q_n$, we find
	\[
	\expec \{ |P_n( \hbar_n \eps_n )|^2 \}
	=
	n^{-1} P( |\hbar_n|^2 \eps_n^2 )
	=
	n^{-1} P( q_n \eps_n^2 )
	=
	\oh( m_n^{-1} \sigma_n^2 )
	\]
	as $n \to \infty$, by \eqref{eq:onm}.
\end{proof}

\begin{proof}[Proof of Theorem~\ref{thm:rate}]
	On an event $E_n$ with probability tending to one, $P_n(h_n h_n')$ is invertible and $P_n(h_n') \, P_n(h_n h_n)^{-1} \, P_n(h_n)$ is less than $1$ (Lemma~\ref{lem:cond1:consequences}). On $E_n$, the OLS estimator is given by \eqref{eq:mu_OLS:2}. Substitute $f_n = \mu_n + \beta_n' h_n + \eps_n$ to see that, on $E_n$, we have
	\[
	\sqrt{n}(\hat{\mu}_n^{\mathrm{OLS}} - \mu_n)
	=
	\sqrt{n}
	\frac%
	{P_n(\eps_n) - P_n(\eps_n h_n') \, P_n(h_n h_n')^{-1} P_n(h_n)}%
	{1 - P_n(h_n') \, P_n(h_n h_n')^{-1} \, P_n(h_n)}.
	\]
	By \eqref{eq:denominatorbound}, the denominator is $1 + \oh_{\pr}(1)$ as $n \to \infty$. The second term in the numerator does not change if we replace $h_n$ by $\hbar_n$. Its absolute value is bounded by
	\[
	|P_n(\hbar_n \eps_n)| \, |P_n(\hbar_n \hbar_n')^{-1}|_2 \, |P_n(\hbar_n)|
	=
	\oh_{\pr}(m_n^{-1/2} \sigma_n) \, \Oh_{\pr}(1) \, \Oh_{\pr}\{(m_n/n)^{1/2}\}
	=
	\oh_{\pr}(n^{-1/2} \sigma_n);
	\]
	here we used \eqref{eq:Pheps:ortho}, \eqref{eq:Graminverse:ortho}, and \eqref{eq:Pnhn:ortho}, respectively. We find
	\[
	\sqrt{n}(\hat{\mu}_n^{\mathrm{OLS}} - \mu_n)
	=
	\sqrt{n} \{1 + \oh_{\pr}(1)\} P_n(\eps_n) + \oh_{\pr}(\sigma_n).
	\]
	Since $\expec\{ P_n(\eps_n)^2 \} = n^{-1} \sigma_n^2$, we have $P_n(\eps_n) = \Oh_{\pr}( n^{-1/2} \sigma_n )$. We conclude that
	\[
	\sqrt{n}(\hat{\mu}_n^{\mathrm{OLS}} - \mu_n)
	= \sqrt{n} \, P_n(\eps_n) + \oh_{\pr}(\sigma_n).
	\]
	Divide both sides by $\sigma_n$ to conclude the proof of Theorem~\ref{thm:rate}.
\end{proof}

\begin{proof}[Proof of Theorem~\ref{thm:AN}]
	By the Lindeberg--Feller central limit theorem \citep[Theorem~5.12]{kallenberg:2002} applied to the triangular array $\{ \eps_n(X_i) : i = 1, \ldots, n\}$ of rowwise iid random variables, Condition~\ref{cond:Lindeberg} is necessary and sufficient for $(\sqrt{n}/\sigma_n) P_n(\eps_n)$ to be asymptotically standard normal. In view of \eqref{eq:AN:prob} and Slutsky's lemma, $(\sqrt{n}/\sigma_n) P_n(\eps_n)$ is asymptotically standard normal if and only if $(\sqrt{n}/\sigma_n) ( \hat{\mu}_n^{\mathrm{OLS}} - \mu )$ is asymptotically standard normal.
	
	We prove \eqref{eq:sigma:consistent}. As in the proof of Theorem~\ref{thm:rate}, there is a sequence $E_n$ of events with probability tending to one such that on $E_n$, the matrix $P_n(h_n h_n')$ is invertible and such that $P_n(h_n') \, P_n(h_n h_n')^{-1} \, P_n(h_n) < 1$. On $E_n$, the OLS estimator of $\sigma_n^2$ is given by \eqref{eq:sigma:handy}. Clearly, we can replace $h_n$ by $\hbar_n = B_n h_n$ and find
	\begin{multline*}
	\hat{\sigma}_{n,\mathrm{OLS}}^2
	=
	P_n(\eps_n^2) - P_n( \eps_n \hbar_n' ) \, P_n(\hbar_n \hbar_n')^{-1} \, P_n(\hbar_n \eps_n) \\
	- 
	(\hat{\mu}_n^{\mathrm{OLS}} - \mu)^2 \{1 - P_n(\hbar_n') P_n(\hbar_n \hbar_n')^{-1} P_n(\hbar_n)\}.
	\end{multline*}
	The bounds established in the course of the proof of Theorem~\ref{thm:rate} together with the fact that $(\hat{\mu}_n^{\mathrm{OLS}} - \mu)^2 = \Oh_{\pr}(n^{-1} \sigma_n^2)$ easily yield
	\[
	\hat{\sigma}_{n,\mathrm{OLS}}^2
	=
	P_n(\eps_n^2) + \oh_{\pr}(m_n^{-1} \sigma_n^2).
	\]
	It then suffices to show that $P_n(\eps_n^2) / \sigma_n^2 = 1 + \oh_{\pr}(1)$. But this is a consequence of Proposition~\ref{prop:LLN} below applied to the triangular array $Y_{n,i} = \eps_n^2(X_i) / \sigma_n^2$. The Lindeberg condition is exactly condition~\eqref{eq:LLN:cond} in that Proposition.
\end{proof}

\begin{proposition}
	\label{prop:LLN}
	Let $\{ Y_{n,i} : 1 \le i \le n \}$ be a triangular array of nonnegative, rowwise iid random variables with unit expectation. If, as $n \to \infty$, for all $\delta > 0$, we have
	\begin{equation}
	\label{eq:LLN:cond}
	\expec[ Y_{n,1} \, \1\{ Y_{n,1} > \delta n \} ] = \oh(1),
	\end{equation}
	then $n^{-1} \sum_{i=1}^n Y_{n,i} = 1 + \oh_\pr(1)$.
\end{proposition}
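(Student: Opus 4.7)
The plan is to use a truncation argument at a sample-size-dependent threshold, splitting $Y_{n,i}$ into a bounded part (which is controlled by the variance) and a tail part (which is controlled by the Lindeberg-type hypothesis in expectation).

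First, I would use a diagonal extraction to upgrade the hypothesis: for each fixed $\delta > 0$, \eqref{eq:LLN:cond} gives $\expec[Y_{n,1} \1\{Y_{n,1} > \delta n\}] \to 0$, so by running through a sequence $\delta = 1/k$ and choosing increasing indices $n_k$ beyond which the $k$th expectation is less than $1/k$, I obtain a sequence $\delta_n \downarrow 0$ with
\[
\expec[Y_{n,1} \1\{Y_{n,1} > \delta_n n\}] = \oh(1), \qquad n \to \infty.
\]

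Next, I would write $Y_{n,i} = Y_{n,i}^{T} + Y_{n,i}^{R}$ with $Y_{n,i}^{T} = Y_{n,i} \1\{Y_{n,i} \le \delta_n n\}$ and $Y_{n,i}^{R} = Y_{n,i} \1\{Y_{n,i} > \delta_n n\}$, and set $T_n = n^{-1} \sum_{i=1}^n Y_{n,i}^{T}$ and $R_n = n^{-1} \sum_{i=1}^n Y_{n,i}^{R}$. For the tail part, non-negativity together with Markov's inequality gives, for every $\eta > 0$,
\[
\pr(R_n > \eta) \le \eta^{-1} \expec[R_n] = \eta^{-1} \expec[Y_{n,1}^R] = \oh(1),
\]
so $R_n \to 0$ in probability. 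For the truncated part, the row-wise iid structure yields $\expec[T_n] = \expec[Y_{n,1}^T] = 1 - \expec[Y_{n,1}^R] \to 1$, while
\[
\Var[T_n] = n^{-1} \Var[Y_{n,1}^T] \le n^{-1} \expec[(Y_{n,1}^T)^2] \le n^{-1} (\delta_n n) \, \expec[Y_{n,1}^T] \le \delta_n = \oh(1),
\]
using $Y_{n,1}^T \le \delta_n n$ in the second inequality and $\expec[Y_{n,1}^T] \le 1$ in the third. Chebyshev's inequality then gives $T_n \to 1$ in probability.

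Combining the two pieces, $n^{-1} \sum_{i=1}^n Y_{n,i} = T_n + R_n = 1 + \oh_\pr(1)$, as claimed. The only subtle step is the initial diagonal extraction of $\delta_n$; once that is in hand, the truncation threshold $\delta_n n$ is simultaneously large enough that the truncated variance is controlled ($\delta_n n$ times the mean, divided by $n$, yields $\delta_n = \oh(1)$) and small enough that the tail contribution, through \eqref{eq:LLN:cond}, vanishes. Since no second-moment assumption is available on the full $Y_{n,i}$, this balanced choice of threshold is what makes the argument work, and it is the only place where the hypothesis is used in a non-trivial way.
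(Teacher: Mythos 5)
Your proof is correct. Every step checks out: the diagonal extraction of $\delta_n \downarrow 0$ with $\expec[Y_{n,1}\1\{Y_{n,1}>\delta_n n\}]=\oh(1)$ is a standard and valid argument; the tail sum $R_n$ is nonnegative with vanishing mean, so Markov gives $R_n=\oh_\pr(1)$; and the bound $\expec[(Y_{n,1}^T)^2]\le \delta_n n\,\expec[Y_{n,1}^T]\le \delta_n n$ combined with Chebyshev handles the truncated part. The route is, however, somewhat different from the paper's. The paper does not argue from scratch: it invokes the weak law of large numbers for triangular arrays (Theorem~2.2.6 in Durrett) with truncation level $b_n=n$, and then verifies its two hypotheses, namely $n\pr(Y_{n,1}>n)\to 0$ and $n^{-1}\expec[Y_{n,1}^2\1\{Y_{n,1}\le n\}]\to 0$; the second is checked by splitting at $\delta n$ for an arbitrary fixed $\delta$, bounding the $\limsup$ by $\delta$, and letting $\delta\to 0$. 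Your argument is the same truncation-plus-Chebyshev mechanism made self-contained, but you truncate at the lower, sample-dependent level $\delta_n n$, which lets you bound the truncated second moment directly without the extra $\limsup$-over-$\delta$ step. What the paper's version buys is brevity via a citation and no need for the diagonal extraction; what yours buys is a fully elementary, self-contained proof that makes explicit where the balance between tail control and variance control comes from. Both are complete and correct.
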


\begin{proof}
	We apply \citep[Theorem~2.2.6]{durrett:2010} with $a_n = b_n = n$. We need to check two conditions: (i) $n \pr( Y_{n,1} > n ) \to 0$ and (ii) $n^{-1} \expec[ Y_{n,1}^2 \1\{ Y_{n,1} \le n \} ] \to 0$ as $n \to \infty$.
	
	Condition~(i) follows at once from $n \pr(Y_{n,1} > n) \le \expec[ Y_{n,1} \1\{ Y_{n,1} > n \}]$ and \eqref{eq:LLN:cond}.
	
	Regarding condition~(ii), choose $\delta \in (0, 1]$ and note that, since $\expec[Y_{n,1}] = 1$, we have
	\begin{align*}
	n^{-1} \expec[ Y_{n,1}^2 \1\{ Y_{n,1} \le n \} ]
	&=
	n^{-1} \expec[ Y_{n,1}^2 \1\{ Y_{n,1} \le \delta n \} ]
	+
	n^{-1} \expec[ Y_{n,1}^2 \1\{ \delta n < Y_{n,1} \le n \} ] \\
	&\le
	\delta + \expec[ Y_{n,1} \1\{ \delta n < Y_{n,1} \}].
	\end{align*}
	The $\limsup$ as $n \to \infty$ is bounded by $\delta$ because of \eqref{eq:LLN:cond}. Since $\delta$ was arbitrary, condition~(ii) follows.
\end{proof}

\acks

The authors are grateful to Chris Oates and to two anonymous reviewers for useful comments and additional references. The authors gratefully acknowledge support from the Fonds de la Recherche Scientifique (FNRS) A4/5 FC 2779/2014-2017 No.\ 22342320, from the contract ``Projet d'Act\-ions de Re\-cher\-che Concert\'ees'' No.\ 12/17-045 of the ``Communaut\'e fran\c{c}aise de Belgique'' and from the IAP research network Grant P7/06 of the Belgian government (Belgian Science Policy).

\bibliographystyle{apt}
\bibliography{biblio}

\begin{thebibliography}{10}

\bibitem{andrews:1991}
{\sc Andrews, D. W.~K.} (1991).
\newblock Asymptotic normality of series estimators for nonparametric and
  semiparametric regression models.
\newblock {\em Econometrika\/} {\bf 59,} 307--345.

\bibitem{bardenet+h:2016}
{\sc Bardenet, R. and Hardy, A.} (2016).
\newblock {Monte Carlo} with determinantal point processes.
\newblock {\em ArXiv e-prints\/}.
\newblock arXiv:1605.00361.

\bibitem{brass+p:2011}
{\sc Brass, H. and Petras, K.} (2011).
\newblock {\em Quadrature theory} vol.~178 of {\em Mathematical Surveys and
  Monographs}.
\newblock American Mathematical Society, Providence, RI.
\newblock The theory of numerical integration on a compact interval.

\bibitem{bugeaud:2012}
{\sc Bugeaud, Y.} (2012).
\newblock {\em Distribution Modulo One and Diophantine Approximation}.
\newblock Cambridge University Press, Cambridge.

\bibitem{delyon+p:2016}
{\sc Delyon, B. and Portier, F.} (2016).
\newblock Integral approximation by kernel smoothing.
\newblock {\em Bernoulli\/} {\bf 22,} 2177--2208.

\bibitem{dick+p:2010}
{\sc Dick, J. and Pillichshammer, F.} (2010).
\newblock {\em Digital nets and sequences}.
\newblock Cambridge University Press, Cambridge.
\newblock Discrepancy theory and quasi-Monte Carlo integration.

\bibitem{durrett:2010}
{\sc Durrett, R.} (2010).
\newblock {\em Probability: Theory and Examples} fourth~ed.
\newblock Cambridge University Press, Cambridge, Cambridge.

\bibitem{Glasserman2003}
{\sc Glasserman, P.} (2003).
\newblock {\em Monte Carlo Methods in Financial Engineering}.
\newblock Springer, New York.

\bibitem{glasserman+y:2005}
{\sc Glasserman, P. and Yu, B.} (2005).
\newblock Large sample properties of weighted {Monte Carlo} estimators.
\newblock {\em Operations Research\/} {\bf 53,} 298--312.

\bibitem{glynn+s:2002}
{\sc Glynn, P.~W. and Szechtman, R.} (2002).
\newblock Some new perspectives on the method of control variates.
\newblock In {\em Monte {C}arlo and quasi-{M}onte {C}arlo methods, 2000 ({H}ong
  {K}ong)}.
\newblock Springer, Berlin pp.~27--49.

\bibitem{gobet+l:2010}
{\sc Gobet, E. and Labart, C.} (2010).
\newblock Solving bsde with adaptive control variate.
\newblock {\em SIAM Journal on Numerical Analysis\/} {\bf 48,} 257--277.

\bibitem{HesterbergNelson1998}
{\sc Hesterberg, T. and Nelson, B.} (1998).
\newblock Control variates for probability and quantile estimation.
\newblock {\em Management Sci.\/} {\bf 44,} 1295--1312.

\bibitem{huber:1981}
{\sc Huber, P.~J.} (1981).
\newblock {\em Robust Statistics}.
\newblock John Wiley, New York.

\bibitem{jie+a:2010}
{\sc Jie, T. and Abbeel, P.} (2010).
\newblock On a connection between importance sampling and the likelihood ratio
  policy gradient.
\newblock In {\em Advances in Neural Information Processing Systems}.
\newblock pp.~1000--1008.

\bibitem{kallenberg:2002}
{\sc Kallenberg, O.} (2002).
\newblock {\em Foundations of Modern Probability} second~ed.
\newblock Springer, New York.

\bibitem{leluc+p+s:2018}
{\sc Leluc, R., Portier, F. and Segers, J.} (June 2019).
\newblock Control variates selection for {Monte Carlo} integration.
\newblock {\em ArXiv e-prints\/}.
\newblock arXiv:1906.10920.

\bibitem{lorentz:1986}
{\sc Lorentz, G.~G.} (1986).
\newblock {\em Approximation of Functions} second~ed.
\newblock Chelsea Publishing Co., New York.

\bibitem{mcculloch+s:2001}
{\sc McCulloch, C.~E. and Searle, S.~R.} (2001).
\newblock {\em Generalized, linear, and mixed models}.
\newblock Wiley-Interscience [John Wiley \& Sons], New York.

\bibitem{mcfadden:2001}
{\sc McFadden, D.} (2001).
\newblock Economic choices.
\newblock {\em The American Rconomic Review\/} {\bf 91,} 351--378.

\bibitem{newey:1997}
{\sc Newey, W.~K.} (1997).
\newblock Convergence rates and asymptotic normality for series estimators.
\newblock {\em J. Econometrics\/} {\bf 79,} 147--168.

\bibitem{novak:2016}
{\sc Novak, E.} (2016).
\newblock Some results on the complexity of numerical integration.
\newblock In {\em Monte Carlo and Quasi-Monte Carlo Methods}.
\newblock Springer pp.~161--183.

\bibitem{oates+c+b+g:2017}
{\sc Oates, C.~J., Cockayne, J., Briol, F.-X. and Girolami, M.} (2018).
\newblock Convergence rates for a class of estimators based on {Stein}'s
  method.
\newblock {\em to appear in Bernoulli\/}.

\bibitem{oates+g:2016}
{\sc Oates, C.~J. and Girolami, M.} (2016).
\newblock {Control functionals for quasi-Monte Carlo integration}.
\newblock {\em Nineteenth International Conference on Artificial Intelligence
  and Statistics (AISTATS), Journal of Machine Learning Research W\&CP\/} {\bf
  51,} 56--65; arXiv:1501.03379v7.

\bibitem{oates:2016}
{\sc Oates, C.~J., Girolami, M. and Chopin, N.} (2017).
\newblock Control functionals for {Monte Carlo} integration.
\newblock {\em J.\ R.\ Statist.\ Soc.\ B\/} {\bf 79,} 695--718.

\bibitem{owen+z:2000}
{\sc Owen, A. and Zhou, Y.} (2000).
\newblock Safe and effective importance sampling.
\newblock {\em J. Amer. Statist. Assoc.\/} {\bf 95,} 135--143.

\bibitem{mcbook}
{\sc Owen, A.~B.} (2013).
\newblock {Monte Carlo Theory, Methods and Examples}.
\newblock \url{http://statweb.stanford.edu/~owen/mc/}.

\bibitem{delyon+:p:2018}
{\sc Portier, F. and Delyon, B.} (2018).
\newblock Asymptotic optimality of adaptive importance sampling.
\newblock In {\em Advances in Neural Information Processing Systems}.
\newblock pp.~3134--3144.

\bibitem{portier+s:2018v3}
{\sc Portier, F. and Segers, J.} (March 2018).
\newblock {Monte Carlo} integration with a growing number of control variates.
\newblock {\em ArXiv e-prints\/}.
\newblock arXiv:1801.01797v3.

\bibitem{casella+r:2003}
{\sc Robert, C.~P. and Casella, G.} (2004).
\newblock {\em Monte {C}arlo Statistical Methods} second~ed.
\newblock Springer Texts in Statistics. Springer-Verlag, New York.

\bibitem{south+o+m+d:2018}
{\sc South, L.~F., Oates, C.~J., Mira, A. and Drovandi, C.} (2018).
\newblock Regularised zero-variance control variates.
\newblock {\em arXiv preprint arXiv:1811.05073\/}.

\bibitem{velleman+w:1981}
{\sc Velleman, P.~F. and Welsch, R.~E.} (1981).
\newblock Efficient computing of regression diagnostics.
\newblock {\em The American Statistician\/} {\bf 35,} 234--242.

\bibitem{wang+x:2012}
{\sc Wang, H. and Xiang, S.} (2012).
\newblock On the convergence rates of {Legendre} approximation.
\newblock {\em Math. Comp.\/} {\bf 81,} 861--877.

\bibitem{zhang:1996}
{\sc Zhang, P.} (1996).
\newblock Nonparametric importance sampling.
\newblock {\em J. Amer. Statist. Assoc.\/} {\bf 91,} 1245--1253.

\end{thebibliography}

\end{document}